\documentclass[12pt]{amsart}
\usepackage[margin=1in]{geometry}
\usepackage{amssymb}
\usepackage{amsthm}
\usepackage{amstext}
\usepackage{amsbsy}
\usepackage{amscd}
\usepackage{enumerate}
\usepackage{chngpage}
\usepackage{mathtools}
\usepackage{amsmath}
\usepackage{hyperref}
\usepackage{tikz}
\usepackage{stmaryrd}
\usepackage{color}
\usepackage{array}

\newtheorem{thm}{Theorem}[section]
\newtheorem{lemma}[thm]{Lemma} \newtheorem{cor}[thm]{Corollary}
\newtheorem{prop}[thm]{Proposition}
\newtheorem{question}[thm]{Question}

\newtheorem{alg}[thm]{Algorithm}
\theoremstyle{definition}
\newtheorem{defn}[thm]{Definition}
\newtheorem{example}[thm]{Example}

\newtheorem{rmk}[thm]{Remark}

\DeclarePairedDelimiter\floor{\lfloor}{\rfloor}
\DeclarePairedDelimiter\ceil{\lceil}{\rceil}

\newcommand\pbin{\genfrac{[}{]}{0pt}{}}

\newcommand{\Z}{\mathbb Z}
\newcommand{\N}{\mathbb N}
\newcommand{\R}{\mathbb R}
\newcommand{\C}{\mathbb C}
\newcommand{\Q}{\mathbb Q}
\newcommand{\F}{\mathbb F}

\newcommand{\OO}{\mathcal O}

\newcommand{\mb}[1]{\mathbb{#1}}


\begin{document}
	\title{Lower bounds for the number of subrings in $\Z^n$}
	
	\author{Kelly Isham}
	
	\date{}
	
	
	\maketitle 
	
	\begin{abstract}
		Let $f_n(k)$ be the number of subrings of index $k$ in $\Z^n$. We show that results of Brakenhoff imply a lower bound for the asymptotic growth of subrings in $\Z^n$, improving upon lower bounds given by Kaplan, Marcinek, and Takloo-Bighash. Further, we prove two new lower bounds for $f_n(p^e)$ when $e \ge n-1$. Using these bounds, we study the divergence of the subring zeta function of $\Z^n$ and its local factors. Lastly, we apply these results to the problem of counting orders in a number field.
	\end{abstract}

	\let\thefootnote\relax\footnotetext{\small Keywords: zeta functions of rings · subrings · orders in number fields · lattices
}
	
	\let\thefootnote\relax\footnotetext{\small Mathematics Subject Classification (2010): 11M41, 20E07}
	\section{Introduction} \label{intro}
	A \emph{subring} of $\mb{Z}^n$ is a sublattice that contains the multiplicative identity $(1,1,\ldots, 1)$ and is closed under componentwise multiplication. Let $f_n(k)$ denote the number of subrings of $\mb{Z}^n$ of finite index $k$. The \emph{subring zeta function} of $\Z^n$ is given by 
	$$
	\zeta_{\Z^n}^R(s) = \sum_{\substack{S \text{ subring of }\\ \text{finite index in } \Z^n}} [\Z^n:S]^{-s} = \sum_{k=1}^\infty f_n(k) k^{-s}.
	$$
	 There is a wider class of zeta functions for groups or rings, which are defined in \cite{gss}. For an extensive survey about these zeta functions, see \cite{ dusautoy}. Let $\prod_p$ denote the product over all primes. The subring zeta function has an Euler product
	$$
	\zeta_{\Z^n}^R(s) = \prod_p \zeta_{\Z^n, \, p}^R(s)
	$$
	with local factors
	$$
	\zeta_{\Z^n, \,p}^R(s) =\sum_{e=0}^\infty f_n(p^e)p^{-es}.
	$$
	Let $\Z_p$ denote the $p$-adic integers. Note that $\zeta_{\Z^n, \,p}^R(s)= \zeta_{\Z_p^n}^R(s)$ where the $p^{-es}$ coefficient in the latter zeta function is the number of $\Z_p$-subalgebras of index $p^e$ in $\Z_p^n$.

	 When $n \leq 4$, there are explicit closed formulas for $\zeta_{\Z^n}^R(s)$, which we summarize in the following theorem. 
	
	\begin{thm}\label{zeta_closed_form}
		We have
		\begin{align*}
		\zeta_{\Z^2}^R(s) &= \zeta(s)\\
		\zeta_{\Z^3}^R(s) &= \frac{\zeta(3s-1)\zeta(s)^3}{\zeta(2s)^s}\\
		\zeta_{\Z^4}^R(s) &= \prod_p \frac{1}{(1-p^{-s})^2(1-p^{2-4s})(1-p^{3-6s})}\bigg( 1 + 4p^{-s}\\
		&+2p^{-2s} +(4p-3)p^{-3s} + (5p-1)p^{-4s} + (p^2-5p)p^{-5s} \\
		&+ (3p^2-4p)p^{-6s}-2p^{2-7s} - 4p^{2-8s} - p^{2-9s}\bigg).
		\end{align*}
	\end{thm}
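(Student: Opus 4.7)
The plan is to parametrize subrings directly via normal-form bases and then enumerate. By the Euler product it suffices to compute each local factor $\zeta_{\Z_p^n}^R(s) = \sum_{e \ge 0} f_n(p^e) p^{-es}$, so I would work $p$-locally throughout, counting $\Z_p$-subalgebras of $\Z_p^n$ of each finite index $p^e$. A clean setup is to insist that any such subring $S$ admit a $\Z_p$-basis of the form $v_1, v_2, \ldots, v_n$ with $v_1 = (1, 1, \ldots, 1)$ and $v_2, \ldots, v_n$ in Hermite normal form modulo $\Z_p v_1$; this exists because $\Z_p v_1$ splits off as a direct summand of $\Z_p^n$. Counting subrings of $\Z_p^n$ of index $p^e$ then reduces to counting HNF matrices of determinant $p^e$ whose rows satisfy the (quadratic) closure conditions $v_i v_j \in S$.

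For $n = 2$: every finite-index subring of $\Z_p^2$ has a basis $\{(1,1), (0, p^e)\}$ with $e \ge 0$, and the only closure condition, $(0, p^e)^2 = p^e \cdot (0, p^e) \in S$, is automatic. Thus $f_2(p^e) = 1$ for every $e$, giving $\zeta_{\Z^2}^R(s) = \zeta(s)$.

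For $n = 3$ and $n = 4$: the formulas are already in the literature (for $n = 4$ most notably due to Nakagawa), and the strategy just sketched reduces each to an explicit enumeration. When $n = 3$ the closure constraints on the $2 \times 2$ Hermite part are mild enough that the generating function can be summed by hand into the displayed quotient of $\zeta$-values. When $n = 4$, one must analyze the structure constants $v_i v_j = \sum_k c_{ij}^k v_k$ subject to commutativity and associativity, and the case analysis branches into many subfamilies indexed by the $p$-adic valuations of the $c_{ij}^k$. The main obstacle is therefore the sheer bookkeeping of the $n = 4$ case; for $n \ge 5$ no such closed form is known, which is precisely what motivates the asymptotic lower bounds pursued in this paper.
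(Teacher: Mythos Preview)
The paper does not supply its own proof of this theorem; immediately after stating it, the author records that $n=2$ follows from $f_2(k)=1$, attributes $n=3$ to Datskovsky--Wright and $n=4$ to Nakagawa, and notes that Liu gave combinatorial proofs via Hermite normal forms. Your sketch is in exactly the same spirit: your $n=2$ argument is complete and matches the paper's one-line justification, and for $n=3,4$ you likewise defer to the literature while outlining the Hermite-normal-form enumeration strategy (which is precisely Liu's method, cited by the paper). So there is nothing substantive to compare---both you and the paper treat the theorem as a summary of known results rather than something proved from scratch here.
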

	The case $n=2$ follows from the fact that $f_2(k) = 1$ for all positive integers $k$. The case $n=3$ is originally due to Datskovsky and Wright \cite{datskovksywright} and $n=4$ is due to Nakagawa \cite{nakagawa}. Liu \cite{liu} gives combinatorial proofs of these formulas. 
	
	In this paper, we are motivated by three broad questions. These questions have been actively studied by several other authors and we summarize some known results below.
	
	\begin{question}\label{fn_ques}
		For each fixed $n$ and $e$, what is $f_n(p^e)$ as a function of $p$?
	\end{question}
	For each fixed $n \le 4$ and $e \ge 0$, comparing the $p^{-es}$ coefficients of the expression in Theorem \ref{zeta_closed_form} shows that $f_n(p^e)$ is a polynomial in $p$.  Liu \cite{liu} gives explicit formulas for $f_n(p^e)$ for fixed $e \leq 5$ and $n >0$, which are all polynomial in $p$. His formula for $e= 5$ has a small error that Atanasov, Kaplan, Krakoff, and Menzel \cite{akkm} correct. Further, the authors extend Liu's work by giving explicit polynomial formulas for $f_n(p^e)$ when $e \in \{6,7,8\}$ and $n >0$. These are the only exact formulas that are known; it is not even known if $f_n(p^e)$ will always be polynomial in $p$. In this paper, we discuss known and new results about lower bounds for $f_n(p^e)$.

	\begin{question}\label{asymp_ques}
		What is the asymptotic growth of the number of subrings in $\Z^n$ of index at most $B$?
	\end{question}
	
	\noindent For each fixed $n$, let $$N_n(B) = \#\{S \subset \Z^n \, :\, S \text{ is a subring and } [\Z^n:S] \leq B\} = \sum_{k\leq B} f_n(k).$$ This function counts the number of subrings of index at most $B$ in $\Z^n$. In \cite{kmt}, Kaplan, Marcinek, and Takloo-Bighash give the asymptotic growth of $N_n(B)$ when $n \leq 5$. The cases $n \leq 4$ follow from Theorem \ref{zeta_closed_form} after applying a Tauberian theorem (see e.g. the appendix of \cite{tauberian}). It is important to note that these authors were able to prove an asymptotic formula when $n=5$ even though no closed formula for $\zeta_{\Z^5}^R(s)$ is known. There is not even a conjecture about the asymptotic growth of $N_n(B)$ for $n \ge 6$. To make progress toward an answer to Question \ref{asymp_ques} when $n \ge 6$, Kaplan, Marcinek, and Takloo-Bighash give upper and lower bounds for the asymptotic behavior of $N_n(B)$.
	
	\begin{thm}\cite[Theorem 6]{kmt} \label{kmt_subring}
		\begin{enumerate}
			\item Let $n \leq 5$. There exists a constant $C_n$ so that 
			$$
			N_n(B) \sim C_n B (\log B)^{\binom{n}{2}-1}
			$$
			as $B \rightarrow \infty$.
			\item Let $n > 5$. For any $\epsilon > 0$, 
			$$
			B(\log B)^{\binom{n}{2}-1} \ll N_n(B) \ll_\epsilon B^{\frac{n}{2} - \frac{7}{6} +\epsilon}
			$$
			as $B \rightarrow \infty$. 
		\end{enumerate}
	\end{thm}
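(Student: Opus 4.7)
The plan is to relate $N_n(B)$ to the analytic behavior of $\zeta_{\Z^n}^R(s)$ via a Tauberian theorem of Delange--Ikehara type, so that the order of the pole at $s=1$ translates into the exponent on $\log B$. The appearance of $\binom{n}{2}$ strongly suggests that $\zeta_{\Z^n}^R(s)$ has a pole of order exactly $\binom{n}{2}$ at $s=1$ when $n \leq 5$, and of order at least $\binom{n}{2}$ when $n \geq 6$.

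For part (1) with $n \leq 4$, I would read off the order of the pole at $s=1$ directly from the closed formulas in Theorem~\ref{zeta_closed_form}. Inspection gives pole orders $1, 3, 6$ for $n = 2, 3, 4$ respectively, each equal to $\binom{n}{2}$; combined with a standard zero-free strip just to the left of $\mathrm{Re}(s)=1$, a Delange--Ikehara theorem immediately yields the claimed asymptotic with $C_n$ expressible in terms of the leading Laurent coefficient at $s=1$. The case $n = 5$ is genuinely harder because no closed form is available; here I would try to express $\zeta_{\Z^5}^R(s)$ as a $p$-adic cone integral (in the style of du Sautoy--Grunewald) and extract just enough meromorphic information near $s=1$ -- pole order, a sliver of continuation to the left, and some control on the leading Laurent coefficient -- to apply the same Tauberian argument.

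For the lower bound in part (2), the natural approach is to produce a uniform local bound $\zeta_{\Z^n, p}^R(s) \gg (1-p^{-s})^{-\binom{n}{2}}$ by constructing $\binom{n}{2}$ independent families of subrings of $p$-power index in $\Z_p^n$. A reasonable source is subrings sandwiched between $\Z_p^n$ and $p^k \Z_p^n$, where closure under componentwise multiplication becomes automatic for a suitably large sublattice once one restricts to those containing the diagonal idempotent $(1,\dots,1)$. Parametrising these by $\binom{n}{2}$ independent elementary-divisor data and taking the Euler product would give a factor $\zeta(s)^{\binom{n}{2}}$ lower bound on $\zeta_{\Z^n}^R(s)$, and the Tauberian theorem then delivers $N_n(B) \gg B(\log B)^{\binom{n}{2}-1}$.

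For the upper bound $N_n(B) \ll_\epsilon B^{n/2 - 7/6 + \epsilon}$, I would aim to bound the abscissa of convergence of $\zeta_{\Z^n}^R(s)$ via a pointwise estimate $f_n(p^e) \ll p^{e(n/2 - 7/6) + \epsilon e}$, proved by $p$-adic integration over the variety cutting out the subring structure constants. The main obstacle, and where I expect most of the real work to lie, is squeezing the $7/6$ savings out of the multiplicative closure condition: a purely sublattice count gives only the exponent $n/2$, so one must argue that a generic sublattice of $\Z^n$ is not closed under componentwise multiplication, identify the codimension of the subring locus in the relevant Grassmannian of sublattices, and turn that geometric input into an effective $p$-adic volume estimate uniform in $p$.
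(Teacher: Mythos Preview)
This theorem is not proved in the present paper; it is quoted from \cite{kmt}. The paper only sketches the ingredients: for $n\le 4$ one reads the pole structure off Theorem~\ref{zeta_closed_form} and applies a Tauberian theorem, for $n=5$ the authors of \cite{kmt} manage the analysis without a closed formula, and for the lower bound in part~(2) they study the simpler Euler product $\prod_p(1+f_n(p)p^{-s})$, locate its rightmost pole, and apply a Tauberian theorem. Your outline for $n\le 4$ matches this exactly, and your suggestion for $n=5$ (a $p$-adic cone-integral analysis in the du~Sautoy--Grunewald style) is consistent with how \cite{kmt} actually proceeds.

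For the lower bound in part~(2), your approach is slightly different in flavor from what the paper attributes to \cite{kmt}. You propose to construct $\binom{n}{2}$ independent families of subrings and thereby minorize $\zeta_{\Z^n}^R(s)$ by $\zeta(s)^{\binom{n}{2}}$. What \cite{kmt} do, according to the paper, is simply use the exact value $f_n(p)=\binom{n}{2}$ for subrings of prime index and analyze $\prod_p(1+\binom{n}{2}p^{-s})$. These give the same pole of order $\binom{n}{2}$ at $s=1$, so the conclusion is the same; their route is shorter because the count $f_n(p)$ is already available.

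For the upper bound, your proposal is plausible but underspecified: the actual argument in \cite{kmt} does go through $p$-adic integration and careful estimation of the resulting integrals, but extracting the precise saving of $7/6$ is the substance of that paper and is not reproduced here. Since the present paper gives no proof of this bound, there is nothing further to compare.
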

	
\noindent	The authors obtain the lower bound in Theorem \ref{kmt_subring}(2) by computing the rightmost pole of the simpler Euler product $\prod_{p} (1 + f_n(p)p^{-s})$ and then applying a Tauberian theorem. In this paper, we show that the results of Brakenhoff \cite{brakenhoff} lead to a new asymptotic lower bound for $N_n(B)$ that improves upon Theorem \ref{kmt_subring}(2).

Observe that Theorem \ref{kmt_subring} implies that $\zeta_{\Z^n}^R(s)$ diverges for all $s\in \C$ such that $\Re(s) \le 1$. Thus the strategy Kaplan, Marcinek, and Takloo-Bighash employ also gives a partial answer to the question: what is the abscissa of convergence of $\zeta_{\Z^n}^R(s)$? Recall that the \emph{abscissa of convergence} of a Dirichlet series $D(s)$ is the unique $\sigma \in \R\cup \{\pm \infty\}$ so that $D(s)$ diverges for all $s$ with $\Re(s) < \sigma$ and converges for all $s$ with $\Re(s) > \sigma$. We will show that results of Brakenhoff \cite{brakenhoff} improve upon the lower bound for the abscissa of convergence of $\zeta_{\Z^n}^R(s)$. 

	
\begin{question} \label{conv_ques}
	What is the abscissa of convergence of $\zeta_{\Z^n , \, p}^R(s)$?
\end{question}

If $\zeta_{\Z^n}^R(s)$ converges, so do each of the local factors. Therefore the abscissa of convergence for $\zeta_{\Z^n}^R(s)$ gives an upper bound for the abscissa of convergence for $\zeta_{\Z^n, \; p}^R(s)$ for each prime $p$. However, not much is known about lower bounds. We will provide a lower bound for the abscissa of convergence of $\zeta_{\Z^n , \, p}^R(s)$.



\subsection{Main results}
In this paper, we give partial results to the three broad questions asked in Section \ref{intro}. First, we provide a new lower bound for the asymptotic growth of $N_n(B)$ that improves upon \cite{kmt} for all $n \ge 7$ (see Table \ref{valuesan}). The main elements in the proof of this theorem come from interpreting results of Brakenhoff \cite{brakenhoff}.

\begin{thm} \label{brakenhoff_lb}
	Fix $n >1$ and let $$a(n) = \max_{0 \leq d \leq n-1} \left(\frac{d(n-1-d)}{(n-1+d)} + \frac{1}{n-1+d}\right).$$ Then
	$
	B^{a(n)} \ll N_n(B)
	$
	as $B \rightarrow \infty$.
\end{thm}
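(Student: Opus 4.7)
The plan is to convert the local lower bound of Proposition~\ref{brakenhoff_bound} into the required global bound on $N_n(B)$ by exploiting the multiplicativity of $f_n$ and the classical density of squarefree integers; this serves as the ``standard Tauberian'' input alluded to in the statement. Since $\zeta_{\Z^n}^R(s)$ has an Euler product, $f_n$ is multiplicative, so for any squarefree integer $m=p_1\cdots p_r$ we have $f_n(m^e)=\prod_{i=1}^r f_n(p_i^e)$.

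Fix a value $d\in\{0,1,\dots,n-1\}$ attaining the maximum defining $a(n)$, and set $e=n-1+d$ and $\alpha=d(n-1-d)$, so that $a(n)=(\alpha+1)/e$. Proposition~\ref{brakenhoff_bound} gives $f_n(p^e)\ge p^{\alpha}$ for every prime $p$, and hence by multiplicativity $f_n(m^e)\ge m^{\alpha}$ for every squarefree $m$. Restricting $N_n(B)$ to indices of the form $m^e$ with $m$ squarefree yields
$$N_n(B)\;\ge\;\sum_{\substack{m\le B^{1/e}\\ m\text{ squarefree}}} f_n(m^e)\;\ge\;\sum_{\substack{m\le B^{1/e}\\ m\text{ squarefree}}} m^{\alpha}.$$

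To estimate the last sum I would apply Abel summation to the classical asymptotic $\sum_{m\le X}\mu^2(m)\sim (6/\pi^2)\,X$ (equivalently, extract the main term from the simple pole at $s=1$ of $\sum_m\mu^2(m)m^{-s}=\zeta(s)/\zeta(2s)$ via a Wiener--Ikehara type Tauberian theorem). A routine computation then gives
$$\sum_{\substack{m\le X\\ m\text{ squarefree}}} m^{\alpha}\;\sim\;\frac{6}{\pi^2(\alpha+1)}\,X^{\alpha+1},$$
and setting $X=B^{1/e}$ produces $N_n(B)\gg B^{(\alpha+1)/e}=B^{a(n)}$, as required.

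The main subtlety is arranging the summation so that no spurious $\log B$ factor appears: the naive approach of summing $f_n(p^e)$ over single prime powers $p^e\le B$ only yields $N_n(B)\gg B^{a(n)}/\log B$ via the prime number theorem, which is strictly weaker than the claimed bound. Summing over all squarefree $m$ rather than over primes is precisely what eliminates this logarithm, since the counting function of squarefree integers is linear in $X$ whereas $\pi(X)$ carries an extra $\log X$ in its denominator. Everything else in the argument is routine once Proposition~\ref{brakenhoff_bound} and the multiplicativity of $f_n$ are in hand.
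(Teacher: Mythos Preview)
Your argument is correct. Both you and the paper start from Proposition~\ref{brakenhoff_bound} and convert the local bound $f_n(p^{n-1+d})\ge p^{d(n-1-d)}$ into the global estimate $N_n(B)\gg B^{a(n)}$; the difference lies only in how that conversion is carried out. The paper first packages Proposition~\ref{brakenhoff_bound} into Lemma~\ref{div_zeta} and Corollary~\ref{brakenhoff_s}, obtaining divergence of $\zeta_{\Z^n}^R(s)$ on $\Re(s)\le a(n)$, and then appeals to an unspecified ``standard Tauberian theorem'' to pass to $N_n(B)$. You instead bypass the Tauberian black box entirely: using multiplicativity to extend the prime-power bound to squarefree $e$-th powers and then summing directly via Abel summation and the density of squarefree integers. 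Your route is more elementary and more explicit, and it makes transparent exactly where the exponent $\frac{d(n-1-d)+1}{n-1+d}$ comes from. Your remark about why one must sum over squarefree $m$ rather than primes (to avoid losing a $\log B$) is also a useful clarification that the paper does not spell out.
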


\begin{center}
\begin{table}[!htp]\caption{Values of $a(n)$ for small $n$}\label{valuesan}
	\centering
\begin{tabular}{|c|m{.2in}m{.2in}m{.2in}m{.2in}m{.2in}m{.2in}m{.2in}c|}
	\hline
\rule{0pt}{3ex}	$n$  & 6&7&8&9&10&20&50&100\\[6pt]
		\hline
\rule{0pt}{3ex}	$a(n)$ & 1& $\frac{9}{8}$ &$ \frac{13}{10}$  &$ \frac{16}{11}$ &$\frac{21}{13}$  
	 &$ \frac{89}{27}$  
	 &$ \frac{581}{69}$ 
	 &$ \frac{2379}{140}$  \\[6pt]
	 	\hline
\end{tabular}\end{table}
\end{center}

 Next, we study two different techniques for bounding the number of subrings in $\Z^n$ of index $p^e$. The first technique is an extension of Brakenhoff's \cite{brakenhoff} results and holds more generally for the function that counts subrings of the ring of integers $\OO_K$ in a fixed number field $K$ of degree $n$. We find a lower bound by showing that a special set of subgroups of $\OO_K$ are subrings and then bounding the number of such subgroups. This is detailed in Section \ref{extended_brakenhoff}. The second technique is based off Liu's \cite{liu} work; we bound subrings by counting the number of $n \times n$ matrices with certain properties, see Sections \ref{method_counting} and \ref{fn_bound_sect}.

We summarize our main results, which give partial answers to Questions \ref{fn_ques} and \ref{conv_ques}. First, we provide new lower bounds for $f_n(p^e)$ that hold for all $e \geq n-1$. We use the first technique to obtain Theorem \ref{fn_bd_subgps}. The other two theorems follow from the second technique.

\begin{rmk}
	Unless otherwise stated, $\max_{A\le x \le B}$ means the maximum over all integers $x$ in the range $[A,B]$.
\end{rmk}

\begin{thm}\label{fn_bd_subgps}
	Fix integers $n> 1$ and $e \ge n-1$. For each $t$, set $k = e - t(n-1)$,  $c = \ceil{\frac{e}{t}} - (n-1)$ and $b = \floor{\frac{e}{t}} - (n-1)$. Set $$h(e,n) = \max_{\ceil{\frac{e}{2(n-1)}} \le t \le \floor{\frac{e}{n-1}}} \left(k(n-1) -c^2(k+t-ct) - b^2(ct-k) \right).$$ Then 
	$f_n(p^{e}) \ge  p^{h(e,n)}$.
\end{thm}

\begin{thm} \label{fn_bound_1}
	Suppose that $e \ge n-1$. Let $$b(n,e) = \max_{0 \leq d \leq n-1} \bigg\lfloor\frac{e}{n-1+d}\bigg\rfloor \cdot d (n-1-d).$$ Then $f_n(p^e)\geq 
	p^{b(n,e)}$.
\end{thm}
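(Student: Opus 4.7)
My plan is to invoke Liu's parametrization (recalled in Sections~\ref{method_counting} and~\ref{fn_bound_sect}) of subrings of $\Z^n$ of index $p^e$ as upper-triangular matrices $M = (m_{ij})$ in Hermite normal form with first row $v_1 = (1,1,\ldots,1)$ (the ring identity), diagonal entries $m_{ii} = p^{c_i}$ for $i \ge 2$ with $c_2 + \cdots + c_n = e$, off-diagonal entries $0 \le m_{ij} < p^{c_j}$ for $i < j$, and satisfying the multiplicative closure conditions on the rows $v_2, \ldots, v_n$.  For each $d \in \{0,\ldots,n-1\}$ I will construct an explicit family of such matrices of cardinality $p^{t\,d(n-1-d)}$, where $t = \lfloor e/(n-1+d)\rfloor$; maximizing over $d$ then gives the stated bound.

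Fix $d$ attaining the max, set $t = \lfloor e/(n-1+d)\rfloor$, and partition $\{2,\ldots,n\}$ into a \emph{light} block $L = \{2,\ldots,n-d\}$ of size $n-1-d$ and a \emph{heavy} block $H = \{n-d+1,\ldots,n\}$ of size $d$.  Set $c_i = 2t$ for each $i \in H$ and $c_i = t$ for each $i \in L$, and absorb the leftover $e - t(n-1+d) \ge 0$ into a single light exponent so that $\sum c_i = e$.  For every pair $(i,j)$ with $i \in L$ and $j \in H$ (automatically $i < j$) let $m_{ij} = p^t b_{ij}$ with $b_{ij}$ ranging freely over $\{0,1,\ldots,p^t-1\}$; set every other off-diagonal entry to $0$.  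Since $m_{ij} < p^{2t} = p^{c_j}$, each matrix is in Hermite normal form, there are $d(n-1-d)$ free slots, and distinct parameter tuples give distinct matrices (and hence distinct subrings), yielding a family of size $p^{t\,d(n-1-d)}$.

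The heart of the argument is verifying multiplicative closure.  Because $v_1$ is the identity, only pointwise products $v_i \cdot v_j$ with $i,j \ge 2$ need to be checked.  A direct computation using $e_k \cdot e_\ell = \delta_{k\ell}e_k$ shows that for $i,j \in L$ the product $v_i \cdot v_j$ is supported on the heavy coordinates and each nonzero entry has the form $p^{2t}(\text{integer})$, which is an integer combination of the basis rows $v_k = p^{2t} e_k$ for $k \in H$; the products involving at least one heavy index collapse to an integer multiple of a single $v_j$ because the heavy rows are supported on one coordinate.  The key obstruction the construction must thread is precisely this absorption: the valuation of the product of two free parameters $p^t b_{ij}$ has to reach the valuation of the corresponding diagonal $p^{c_k}$ on $H$, which forces $c_k \le 2t$ there.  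The largest $t$ for which the uniform assignment $c_k = 2t$ on $H$ and $c_i = t$ on $L$ is compatible with $\sum c_i = e$ is exactly $t = \lfloor e/(n-1+d)\rfloor$, explaining the appearance of this floor.  The main obstacle is carrying out this closure check cleanly, especially handling the single exponent that absorbs the leftover from $L$; once that is done, combining the family size with the choice of $d$ attaining the max yields $f_n(p^e) \ge p^{b(n,e)}$.
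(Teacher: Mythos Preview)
Your approach is essentially the same as the paper's: both construct an explicit family of subring matrices with a two-block diagonal (a ``heavy'' block of exponents $2t$ of size $d$ and a ``light'' block of exponents $t$ of size $n-1-d$), with the $d(n-1-d)$ free parameters of the form $p^t b_{ij}$ sitting in the light-row/heavy-column positions. The paper packages this construction as Proposition~\ref{21_prop} and Corollary~\ref{fn_bound_cor} (with $k=2t$, $\ell\ge t$) and then optimizes over $t$ and $d$; your write-up is more self-contained and, to your credit, handles the residue $e-t(n-1+d)$ more explicitly than the paper does (the paper quietly needs a version of Proposition~\ref{21_prop} allowing one $\ell$-entry to differ). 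Conventions differ (rows vs.\ columns, identity in the first row vs.\ last column, light block before heavy vs.\ after), but the subrings produced and the counting are the same.

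There is one real gap in your closure check. For $i=j\in L$ the product $v_i\cdot v_i$ is \emph{not} supported only on the heavy coordinates: it has the nonzero entry $p^{2c_i}$ at the light coordinate $i$, so it cannot be written as an integer combination of the heavy rows $v_k=p^{2t}e_k$ alone. The fix is straightforward: subtract $p^{c_i}v_i$ first, obtaining
\[
v_i\cdot v_i - p^{c_i}v_i \;=\; \sum_{k\in H}\bigl(p^{2t}b_{ik}^2 - p^{c_i+t}b_{ik}\bigr)e_k,
\]
and then observe that each coefficient is divisible by $p^{2t}$ because $c_i\ge t$ (this inequality also holds for the single light index absorbing the leftover, since $c_{i_0}=t+r\ge t$). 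With this correction the closure verification is complete and your argument goes through.
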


The lower bounds from Theorem \ref{fn_bound_1} and the work in Section \ref{asymptotic_section} leads to a lower bound for the abscissa of convergence of $\zeta_{\Z^n, \, p}^R(s)$.

\begin{thm}\label{maintheorem_1} Fix $n> 1$. Let 
	$$
	c_7(n) =\max_{0 \le d \le n-1} \frac{d(n-1-d)}{n-1+d}.
	$$
	Then $\zeta_{\Z^n, \, p}^R(s)$ diverges for all $s$ such that $$\Re(s) \le c_7(n).$$
\end{thm}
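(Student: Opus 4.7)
The plan is to apply Theorem \ref{fn_bound_1} directly to the defining Dirichlet series of $\zeta_{\Z^n,\,p}^R(s)$ and then restrict attention to a sparse sub-sequence of exponents $e$ where the floor function in $b(n,e)$ is exact. Let $d^{*}\in\{0,1,\dots,n-1\}$ be an integer that realizes the maximum in the definition of $c_7(n)$, so that
$$
c_7(n)=\frac{d^{*}(n-1-d^{*})}{n-1+d^{*}}.
$$
By Theorem \ref{fn_bound_1}, for every $e\ge n-1$ we have $f_n(p^e)\ge p^{b(n,e)}\ge p^{\lfloor e/(n-1+d^{*})\rfloor\cdot d^{*}(n-1-d^{*})}$.

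The key observation is that on the arithmetic progression $e=m(n-1+d^{*})$ with $m\ge 1$, the floor is exact, and the exponent becomes $m\cdot d^{*}(n-1-d^{*})=e\cdot c_7(n)$. Hence along this subsequence,
$$
f_n(p^{e})\ge p^{\,e\,c_7(n)}.
$$
Because all coefficients $f_n(p^{e})$ are non-negative, convergence of the Dirichlet series $\sum_{e\ge 0} f_n(p^e)p^{-es}$ on any vertical line $\Re(s)=\sigma$ coincides with absolute convergence there, so it suffices to exhibit infinitely many terms whose absolute values do not decay.

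Fix any $s$ with $\Re(s)\le c_7(n)$. Then for every $m\ge 1$, setting $e=m(n-1+d^{*})$,
$$
\bigl|f_n(p^{e})p^{-es}\bigr|=f_n(p^{e})\,p^{-e\Re(s)}\ge p^{\,e(c_7(n)-\Re(s))}\ge 1.
$$
Since the general term does not tend to zero, the series $\sum_{e\ge 0} f_n(p^e)p^{-es}$ diverges, proving the claim. The main point worth checking carefully is that the floor simplifies exactly on the chosen subsequence (so that no additive error accumulates and the threshold $c_7(n)$ is attained on the nose rather than just approached); the endpoint $\Re(s)=c_7(n)$ of the divergence region then falls out for free because the individual terms themselves are bounded below by $1$. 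The hypothesis $e\ge n-1$ of Theorem \ref{fn_bound_1} is automatic along $e=m(n-1+d^{*})$ with $m\ge 1$, so no separate small-$e$ analysis is required.
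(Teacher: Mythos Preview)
Your proof is correct and follows the same overall strategy as the paper: invoke Theorem \ref{fn_bound_1} to bound $f_n(p^e)$ from below, then show the resulting lower-bound series diverges for $\Re(s)\le c_7(n)$. The only difference is in how the floor function is handled. The paper replaces $\lfloor e/(n-1+d)\rfloor$ by $e/(n-1+d)-1$ for \emph{all} $e\ge n-1$, obtaining a genuine geometric series (Lemma \ref{sum_bound_1}) whose divergence is then read off. You instead restrict to the arithmetic progression $e=m(n-1+d^{*})$ on which the floor is exact and apply the term test directly. Your route is slightly more economical---no auxiliary factor $p^{-d(n-1-d)}$ to carry around---and the boundary case $\Re(s)=c_7(n)$ falls out immediately; the paper's route has the minor advantage of bounding the entire tail rather than a subsequence, which would be relevant if one wanted more than bare divergence.
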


\subsection{Outline of the paper}

In the rest of this paper, we discuss lower bounds for various functions related to subrings in $\Z^n$ including $f_n(p^e)$, $N_n(B)$, and the abscissa of convergence of $\zeta_{\Z^n, \, p}^R(s)$. In Section \ref{extended_brakenhoff}, we summarize results from Brakenhoff \cite{brakenhoff} and then show how these results lead to better asymptotic lower bounds for $N_n(B)$. Brakenhoff gives a lower bound for $f_n(p^e)$, when $e \in [(n-1), 2(n-1)]$. We extend his method to obtain a lower bound for $f_n(p^e)$ for all $e \ge n-1$. In Section \ref{method_counting}, we introduce Liu's method of counting subrings in $\Z^n$ by counting matrices in Hermite normal form with certain conditions. We then provide an algorithmic method for counting such matrices. In Section \ref{fn_bound_sect}, we prove new lower bounds for $f_n(p^e)$ using the method discussed in Section \ref{method_counting}. In Section \ref{asymptotic_section}, we prove a lower bound for the abscissa of convergence of $\zeta_{\Z^n, \, p}^R(s)$. In Section \ref{order_sect}, we connect the results in this paper to the problem of counting orders in a number field. Finally, in Section \ref{further_q} we discuss further questions.

\section{Extending results of Brakenhoff} \label{extended_brakenhoff}

In his 2009 PhD thesis \cite{brakenhoff}, Brakenhoff studies similar questions to those asked in Section \ref{intro}. Let $\text{Nf}_n$ be the set of number fields of degree $n$. For $K \in \text{Nf}_n$, let $\OO_K$ be the ring of integers of $K$. Brakenhoff considers the function
$$
f(n,m) = \max_{K \in \text{Nf}_n} \#\left\{R \subset \mathcal{O}_K \, : \, R \text{ is a subring of index } m \right\}.
$$
An \emph{order} in $\OO_K$ is a finite index subring of $\OO_K$ that contains the multiplicative identity. Since we assume that all subrings contain the multiplicative identity, the function $f(n,m)$ is also counting orders.

\begin{lemma} \cite[Lemma 5.10]{brakenhoff} \label{brakenhoff_lemma}
	Every additive subgroup $G\subset\mathcal{O}_K$ that satisfies $\Z + m^2 \OO_K \subset G \subset \Z + m\OO_K$ for some integer $m$ is a subring.
\end{lemma}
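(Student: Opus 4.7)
The plan is to show closure under multiplication, since the identity element $1$ is automatically in $G$ (because $\Z \subset \Z + m^2\mathcal{O}_K \subset G$). Given any two elements $g_1, g_2 \in G \subset \Z + m\mathcal{O}_K$, I would write them in the form $g_i = a_i + m\alpha_i$ with $a_i \in \Z$ and $\alpha_i \in \mathcal{O}_K$, and then expand the product as
\[
g_1 g_2 = a_1 a_2 + a_1(m\alpha_2) + a_2(m\alpha_1) + m^2\alpha_1\alpha_2.
\]
The strategy is to argue each of the four summands separately lies in $G$.

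The easy terms are the outer two: $a_1 a_2 \in \Z \subset G$, and $m^2 \alpha_1 \alpha_2 \in m^2\mathcal{O}_K \subset G$ by the right-hand containment hypothesis. The middle two terms are the point where the hypothesis $\Z \subset G$ has to be used. Since $a_i \in \Z \subset G$, we have $m\alpha_i = g_i - a_i \in G$, using that $G$ is an additive subgroup. Then because $G$ is closed under addition, it is closed under multiplication by any integer, so $a_1 (m\alpha_2) \in G$ and $a_2(m\alpha_1) \in G$. Adding the four terms gives $g_1 g_2 \in G$, completing the verification.

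There is no real obstacle here; the statement is essentially a calculation once one unpacks the definition of the sandwiching condition. The only subtlety worth flagging is that the representation $g_i = a_i + m\alpha_i$ is not unique, but the argument does not require uniqueness --- it only uses the existence of some such decomposition together with the fact that $\Z$ itself sits inside $G$, which is what makes $m\alpha_i = g_i - a_i$ an element of $G$. In short, the proof is a three-line expansion of $g_1 g_2$ followed by placing each piece into $G$ via one of the two containments in the hypothesis.
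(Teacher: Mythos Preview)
Your proof is correct. The paper does not actually supply a proof of this lemma --- it is quoted verbatim from Brakenhoff's thesis and used as a black box --- so there is nothing to compare against; your argument (write $g_i = a_i + m\alpha_i$, expand the product, and place each of the four summands into $G$ via one of the two containments) is the natural direct verification and would serve perfectly well as the omitted proof.
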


Using Lemma \ref{brakenhoff_lemma}, Brakenhoff finds a lower bound for $f(n,p^e)$ when $e \in [n-1, 2(n-1)].$ First we recall a definition.

\begin{defn}
	Let $q$ be a prime power. The $q$-binomial coefficient is given by
	$$
	\pbin{n}{k}_q= \prod_{i=0}^{k-1} \frac{q^n-q^i}{q^k-q^i}.
	$$
\end{defn}

\begin{prop}\cite[Page 42]{brakenhoff}  \label{brakenhoff_refined_bound}
	Fix integers $n > 0$ and $d \in [0,n-1]$. Then $$f(n, p^{n-1+d}) \geq \pbin{n-1}{d}_p.$$
\end{prop}
\begin{rmk}\label{degree_bound}
	It is important to note that $\pbin{n-1}{d}_p$ is a polynomial in $p$ of degree $d(n-1-d)$.
\end{rmk}
\begin{rmk}\label{brakenhoff_bound_rmk}
Note that Lemma \ref{brakenhoff_lemma} is still true for each fixed number field $K$. Further, it is still true if we replace $\OO_K$ with $\Z^n$. Therefore the bound in Proposition \ref{brakenhoff_refined_bound} is also a lower bound for $f_n(p^{n-1+d})$ when $d$ is an integer in $[0, n-1]$.
\end{rmk}
 
  In Section \ref{ext_brak_sec}, we extend Proposition \ref{brakenhoff_refined_bound} by providing a lower bound for $f_n(p^e)$ when $e \ge n-1$. This extended lower bound holds for all $K \in \text{Nf}_n$. In Section \ref{fn_bound_sect}, we provide a different technique for bounding $f_n(p^e)$.

\subsection{Asymptotic lower bounds using Brakenhoff's results}
We now show that Proposition \ref{brakenhoff_refined_bound} leads to new a lower bound for the asymptotic growth of subrings in $\Z^n$. The main theorem in this section (Theorem \ref{brakenhoff_lb}) does not appear in \cite{brakenhoff}.
\begin{lemma}\label{div_zeta}
	Fix $n \ge 0$ and $e \geq 0$. Suppose there exists a constant $c_n >0$ and positive integer $a$ so that $f_n(p^e) \geq c_n p^a$. Then $\zeta_{\Z^n}^R(s)$ diverges for all $s$ such that $\Re(s) \leq \frac{a+1}{e}.$
\end{lemma}

The following corollary was not stated in Brakenhoff's thesis.

\begin{cor}\label{brakenhoff_s}
	Fix $n > 1$. Then $\zeta_{\Z^n}^R(s)$ diverges for all $s$ such that $$\Re(s) \leq \max_{0 \leq d \leq n-1} \left(\frac{d(n-1-d)}{n-1+d} + \frac{1}{n-1+d}\right).$$
\end{cor}
\begin{proof}
	By Remarks \ref{degree_bound} and \ref{brakenhoff_bound_rmk}, for each fixed $n  > 0$ and for each integer $d \in [0, n-1]$, 
	$$
	f_n(p^{n-1+d}) \ge p^{d(n-1-d)}.
	$$
	Applying Lemma \ref{div_zeta} gives the result.
\end{proof}

\begin{proof}[Proof of Theorem \ref{brakenhoff_lb}]
This follows from Corollary \ref{brakenhoff_s} and standard Tauberian theorem.
\end{proof}

\noindent Theorem \ref{brakenhoff_lb} is an improvement upon Theorem \ref{kmt_subring} for all $n \ge 7$. 

\subsection{Extending Brakenhoff's lower bound for $f_n(p^e)$} \label{ext_brak_sec}
We begin by sketching the proof of Proposition \ref{brakenhoff_refined_bound}, which relies on Lemma \ref{brakenhoff_lemma}.

Take $m = p$ in Lemma \ref{brakenhoff_lemma} and consider the following set
$$
\{G \subset \OO_K \, : \, \Z+ p^2\OO_K \subset G \subset \Z+ p\OO_K\text{ and } \dim_{\F_p} \left(G / (\Z+p^2\OO_K)\right) = d\},
$$
which is a subset of the set of subrings in $\OO_K$ of index $p^{n-1+d}$. Note that there is a small typo on page 42 of \cite{brakenhoff}; he writes $\dim_{\F_p} \left(G / (\Z+p\OO_K)\right) = d$. Brakenhoff proves that the cardinality of this set is exactly $\pbin{n-1}{d}_p$ by showing the set is in bijection with the set of $\F_p$-vector spaces in $\F_p^{n-1}$ of dimension $d$.

We now generalize this lower bound by relating subgroups satisfying the condition in Lemma \ref{brakenhoff_lemma} to subgroups in $\left(\Z/m\Z\right)^{n-1}$. Let $K$ be a number field of degree $n$ and consider the set 
$$
\{G \subset \OO_K \, : \, \Z+ m^2 \OO_K \subset G \subset \Z+ m \OO_K\}.
$$
By Lemma \ref{brakenhoff_lemma}, every finite index subgroup in this set is a subring of $\OO_K$. Since $\OO_K$ is a free $\Z$-module of rank $n$, $\OO_K$ is additively isomorphic to $\Z^n$. Let  $\{1, v_1, \ldots, v_{n-1}\}$ be a basis for $\OO_K$. There is a bijection between sublattices of $\Z^{n-1}$ and subgroups of $\OO_K$ that contain $\Z$. 

\begin{prop}\label{G_lattice_cond}
	Let $K$ be a degree $n$ number field and let $\{1 , v_1, \ldots, v_{n-1}\}$ be a basis for $\OO_K$. For any $m\ge 1$, $\Z+ m^2 \OO_K \subset G \subset \Z+ m\OO_K$ if and only if $G$ contains the sublattice of $\Z^{n-1}$ spanned by $m^2v_1, \ldots, m^2v_{n-1}$ and $G$ is contained in the sublattice of $\Z^{n-1}$ spanned by $mv_1, \ldots, mv_{n-1}$.
\end{prop}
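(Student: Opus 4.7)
The plan is to use the bijection (established in the paragraph preceding the proposition) between subgroups of $\OO_K$ containing $\Z$ and sublattices of $\Z^{n-1}$, where we identify $\Z^{n-1}$ with the free $\Z$-module $\OO_K/\Z$ generated by the images of $v_1,\ldots,v_{n-1}$. Because $G$ is sandwiched between $\Z+m^2\OO_K$ and $\Z+m\OO_K$, both of which contain $\Z$, the subgroup $G$ itself contains $\Z$, so the bijection applies to $G$. Moreover, the bijection $H\mapsto H/\Z$ is inclusion-preserving in both directions, so the containment statement on the $\OO_K$-side is equivalent to the containment statement on the $\Z^{n-1}$-side. The core of the proof is therefore to identify the images of $\Z+m\OO_K$ and $\Z+m^2\OO_K$ under this bijection.

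The first step is to compute $(\Z+m\OO_K)/\Z$ explicitly. A general element of $\Z+m\OO_K$ has the form $a_0 + m(c_0+c_1v_1+\cdots+c_{n-1}v_{n-1})$ for $a_0,c_0,\ldots,c_{n-1}\in\Z$. Rearranging gives $(a_0+mc_0) + mc_1 v_1 + \cdots + mc_{n-1}v_{n-1}$, so modulo $\Z$ every such element lies in the sublattice generated by $mv_1,\ldots,mv_{n-1}$, and every element of this sublattice is obviously attained. Hence $(\Z+m\OO_K)/\Z$ is precisely the sublattice of $\Z^{n-1}$ spanned by $mv_1,\ldots,mv_{n-1}$. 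The second step is the identical computation with $m$ replaced by $m^2$, showing $(\Z+m^2\OO_K)/\Z$ is the sublattice spanned by $m^2v_1,\ldots,m^2v_{n-1}$.

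The final step is to combine these two identifications with the order-preserving bijection. Applying the quotient by $\Z$ to the chain $\Z+m^2\OO_K \subset G \subset \Z+m\OO_K$ yields the equivalent chain of lattices, and conversely any lattice $G/\Z$ sandwiched between the two explicit sublattices pulls back to a subgroup $G\supset\Z$ satisfying the original chain. This gives the stated equivalence.

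I do not expect a real obstacle here; the only subtlety is keeping the identification $\Z^{n-1}\cong\OO_K/\Z$ straight, so that statements such as ``$G$ contains the sublattice spanned by $m^2v_1,\ldots,m^2v_{n-1}$'' are unambiguously interpreted in the quotient. Once this identification is fixed at the start, the proof is a matter of carefully unwinding the definitions.
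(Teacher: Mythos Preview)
Your proposal is correct and follows essentially the same approach as the paper: both arguments identify $\Z+m\OO_K$ and $\Z+m^2\OO_K$ with the sublattices spanned by $mv_1,\ldots,mv_{n-1}$ and $m^2v_1,\ldots,m^2v_{n-1}$ respectively under the inclusion-preserving bijection $H\mapsto H/\Z$, and then read off the equivalence. Your version simply fills in the explicit coset computation that the paper leaves implicit.
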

\begin{proof}
	Observe that $\Z+ m^2\OO_K$ is the subgroup corresponding to the sublattice of $\Z^{n-1}$ spanned by $ m^2v_1, \ldots, m^2v_{n-1}$. Therefore $\Z + m^2 \OO_K\subset G$ if and only if $G$ contains all basis elements of $\Z+ m^2 \OO_K$. The second condition follows from the fact that $\Z+ m \OO_K$ corresponds to the sublattice spanned by $mv_1, \ldots, mv_{n-1}$.
\end{proof}

\begin{thm}\label{subgps_to_subrings}
	Let $K$ be a degree $n$ number field. Fix some $k\in \N$. The number of subgroups $G$ of index $m^{n-1}k$ such that $\Z + m^2 \OO_K \subset G \subset \Z+ m\OO_K$ is equal to the number of subgroups of order $k$ in $\left(\Z/m\Z\right)^{n-1}$.
\end{thm}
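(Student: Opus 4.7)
The plan is to chain together three bijections: first turn the condition $\Z + m^2\OO_K \subset G \subset \Z + m\OO_K$ into a statement about sublattices of $\Z^{n-1}$; then use the correspondence theorem to pass to the quotient $(\Z/m\Z)^{n-1}$; and finally invoke self-duality of $(\Z/m\Z)^{n-1}$ to convert ``index $k$'' into ``order $k$''.

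First I would set up the identification $\OO_K/\Z \cong \Z^{n-1}$ using the basis $\{\bar v_1,\dots,\bar v_{n-1}\}$. Since $\Z + m^2\OO_K \subset G$ forces $\Z \subset G$, the previously established bijection between subgroups of $\OO_K$ containing $\Z$ and sublattices of $\Z^{n-1}$ applies. Under this bijection, the preceding proposition says that the $G$'s sandwiched between $\Z + m^2\OO_K$ and $\Z + m\OO_K$ correspond exactly to sublattices $H \subset \Z^{n-1}$ sandwiched between $m^2\Z^{n-1}$ and $m\Z^{n-1}$.

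Next I would translate the index condition. Because $[\OO_K : \Z + m\OO_K] = m^{n-1}$, and because the above bijection is index-preserving on the quotients,
$$[\OO_K : G] \;=\; [\OO_K : \Z + m\OO_K]\cdot [\Z + m\OO_K : G] \;=\; m^{n-1}\,[m\Z^{n-1} : H].$$
Hence $[\OO_K : G] = m^{n-1}k$ if and only if $[m\Z^{n-1} : H] = k$. The classical correspondence theorem then gives a bijection between sublattices $H$ with $m^2\Z^{n-1} \subset H \subset m\Z^{n-1}$ and subgroups $\bar H \subset m\Z^{n-1}/m^2\Z^{n-1}$, and this bijection preserves the index of $H$ in $m\Z^{n-1}$. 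Composing with the natural isomorphism $m\Z^{n-1}/m^2\Z^{n-1} \cong (\Z/m\Z)^{n-1}$ (sending $m v_i \mapsto e_i$), we obtain a bijection between the subgroups $G$ in question and subgroups of $(\Z/m\Z)^{n-1}$ of \emph{index} $k$.

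The final step, and the only one with any content beyond bookkeeping, is to reconcile ``index $k$'' with ``order $k$'' as stated in the theorem. Since $(\Z/m\Z)^{n-1}$ is a finite abelian group isomorphic to its Pontryagin dual, the map $\bar H \mapsto \bar H^{\perp}$ gives a bijection between subgroups of order $k$ and subgroups of index $k$, so the two counts coincide. The main potential pitfall is just keeping the direction of the indices straight; I would verify the identification on the two extreme cases $G = \Z + m\OO_K$ (giving $k = 1$, the full subgroup) and $G = \Z + m^2\OO_K$ (giving $k = m^{n-1}$, the trivial subgroup) as a sanity check before writing up the final bijection.
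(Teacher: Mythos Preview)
Your proof is correct and follows essentially the same route as the paper's: pass to sublattices of $\Z^{n-1}$ via the preceding proposition, apply the lattice isomorphism (correspondence) theorem to identify such $H$ with subgroups of $m\Z^{n-1}/m^2\Z^{n-1}\cong (\Z/m\Z)^{n-1}$, compute the index multiplicatively, and finish by noting that in a finite abelian group the number of subgroups of index $k$ equals the number of order $k$. The only cosmetic difference is that you invoke Pontryagin duality explicitly for this last equality, whereas the paper simply states it as an observation.
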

\begin{proof}
	By the Lattice Isomorphism Theorem for groups, there is a bijection between subgroups $G\subset \Z^{n-1}$ such that $m^2 \Z^{n-1} \subset G \subset m\Z^{n-1}$ and subgroups $G'\subset m \Z^{n-1} / m^2 \Z^{n-1} \cong \left(\Z/m\Z\right)^{n-1}$. Fix some $G \subset \Z^{n-1}$ satisfying the conditions. Then 
	$$[\Z^{n-1}: G]  = [\Z^{n-1}: m\Z^{n-1}][m\Z^{n-1}:G] = m^{n-1} [m\Z^{n-1}:G].$$
	Set $k = [m\Z^{n-1}:G]$. Let $G'$ be the subgroup of $\left(\Z/m\Z\right)^{n-1}$ corresponding to $G$. Then $k = [m\Z^{n-1}:G] = [\left(\Z/m\Z\right)^{n-1}: G']$. Finally, observe that the number of subgroups of index $k$ in $\left(\Z/m\Z\right)^{n-1}$ is equal to the number of subgroups of order $k$ in $\left(\Z/m\Z\right)^{n-1}$.
\end{proof}

In order to find a lower bound for $f_n(p^e)$, it now suffices to count the number of subgroups in $\left(\Z/m\Z\right)^{n-1}$. Specifically, we will set $m = p^t$ for some $t > 0$.

\begin{defn}
	Let $\lambda = (\lambda_1, \ldots, \lambda_r)$ be a partition. A finite abelian $p$-group has \emph{type} $\lambda$ if $G  \cong \Z/p^{\lambda_1} \Z\times \cdots \times \Z/p^{\lambda_{r}}\Z.$
\end{defn}

Let $\lambda'$ denote the conjugate partition of $\lambda.$ We use the notation $\nu \subseteq \lambda$ if $\nu_i \le  \lambda_i$ for all $i = 1, \ldots, r$. 

\begin{thm} \label{mu_subgps}\cite[Page 1]{stehling}
	Let $\nu \subseteq \lambda$ be partitions. Let $\nu'$ be the conjugate partition of $\nu$ and $\lambda'$ be the conjugate partition of $\lambda.$ The number of subgroups of type $\nu$ in a finite abelian $p$-group of type $\lambda$ is equal to
	$$
	\prod_{j \ge 1} p^{\nu_{j+1}'(\lambda_j' - \nu_j')} \pbin{\lambda_j' - \nu_{j+1}'}{\nu_{j}' - \nu_{j+1}'}_p.
	$$
\end{thm}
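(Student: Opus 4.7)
The plan is to prove this formula by a layer-by-layer analysis along the descending $p$-adic filtration $G = p^0 G \supset pG \supset p^2 G \supset \cdots \supset p^{\lambda_1}G = 0$. The starting observation is that, writing $G \cong \bigoplus_i \Z/p^{\lambda_i}\Z$, the quotient $G_j := p^{j-1}G/p^j G$ is an $\F_p$-vector space of dimension $\lambda_j'$: each cyclic factor $\Z/p^{\lambda_i}\Z$ contributes a one-dimensional piece to $G_j$ precisely when $\lambda_i \ge j$. The same identity, with $\nu_j'$ in place of $\lambda_j'$, applies to any subgroup $H \le G$ of type $\nu$.

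Next I would associate to each subgroup $H \le G$ the sequence of $\F_p$-subspaces $W_j := (p^{j-1}H + p^j G)/p^j G \subseteq G_j$. The multiplication-by-$p$ map on $G$ induces an $\F_p$-linear map $G_j \to G_{j+1}$ which carries $W_j$ into $W_{j+1}$, so the sequence $(W_j)$ is constrained by a chain of compatibility conditions. A direct verification using the structure theorem for $H$ gives $\dim_{\F_p} W_j = \nu_j'$.

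I would then count subgroups $H$ of type $\nu$ in two stages. The first stage counts compatible sequences $(W_j)$: working downward in $j$, once $W_{j+1}$ is fixed, the compatibility condition forces $W_j$ to contain a specific $\nu_{j+1}'$-dimensional subspace of $G_j$ (the pullback of $W_{j+1}$ under multiplication by $p$). Choosing the remaining dimensions amounts to picking a $(\nu_j' - \nu_{j+1}')$-dimensional subspace in an ambient space of dimension $\lambda_j' - \nu_{j+1}'$, which contributes the Gaussian binomial factor $\pbin{\lambda_j' - \nu_{j+1}'}{\nu_j' - \nu_{j+1}'}_p$. The second stage counts the subgroups realizing a fixed compatible sequence $(W_j)$: after the subspaces are determined, one must choose actual lifts of generators at each level, and at layer $j$ the lifting freedom is exactly $p^{\nu_{j+1}'(\lambda_j' - \nu_j')}$ (the exponent being the product of the rank of the already-determined piece at level $j+1$ with the codimension of $W_j$ in $G_j$, which measures the coset ambiguity).

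The main obstacle will be making the second stage precise. One must verify that the map from subgroups to (sequence, lift-data) pairs is bijective, that the number of lifts at each level is independent of the earlier choices, and that this number is indeed $p^{\nu_{j+1}'(\lambda_j' - \nu_j')}$ rather than some similar-looking expression; the book-keeping between what is forced by compatibility and what remains free is where mistakes are easiest to make. Once these verifications are carried out, multiplying the contributions of the two stages over all $j \ge 1$ yields exactly the product formula in the statement.
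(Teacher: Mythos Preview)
The paper does not prove this theorem: it is quoted from \cite{stehling} and immediately used as a black box (the very next line begins ``Using this theorem, we can give an exact formula\ldots''). So there is no argument in the paper to compare your proposal against.

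For what it is worth, your outline is the classical route to this formula (in the spirit of Birkhoff and Butler): filter $G$ by powers of $p$, record at each layer an $\F_p$-subspace of dimension $\nu_j'$ inside one of dimension $\lambda_j'$, count compatible chains of subspaces by Gaussian binomials, and then count the subgroups realising a fixed chain by a power of $p$. One point to watch: with the filtration $G_j = p^{j-1}G/p^jG$ you wrote down, multiplication by $p$ induces a \emph{surjection} $G_j \twoheadrightarrow G_{j+1}$, so the compatibility condition is that $W_j$ surjects onto $W_{j+1}$, not that $W_j$ contains a prescribed $\nu_{j+1}'$-dimensional subspace. Your phrasing fits more naturally with the dual filtration by $p$-torsion layers $G[p^j]/G[p^{j-1}]$, where multiplication by $p$ gives \emph{injections} $G_{j+1}\hookrightarrow G_j$ and $W_j$ is then genuinely forced to contain the image of $W_{j+1}$. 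Either filtration leads to the same product, but the bookkeeping differs, and your own warning that this is ``where mistakes are easiest to make'' is well placed.
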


Using this theorem, we can give an exact formula for the number of subgroups of type $\nu$ in a finite abelian $p$-group of type $\lambda= (t,t ,\ldots, t)$.
\begin{cor}\label{mu_subgps_cor}
	The number of subgroups of type $\nu$ in $\left(\Z/p^t\Z\right)^{n-1}$ is equal to
	$$
	\prod_{j \ge 1} p^{\nu_{j+1}'((n-1) - \nu_j')} \pbin{(n-1) - \nu_{j+1}'}{\nu_{j}' - \nu_{j+1}'}_p.
	$$
\end{cor}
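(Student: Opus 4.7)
The plan is to apply Theorem \ref{mu_subgps} directly, taking $\lambda$ to be the partition describing the type of the abelian $p$-group $(\Z/p^t\Z)^{n-1}$. Explicitly, $\lambda = (t,t,\ldots,t)$ with $n-1$ parts equal to $t$. The first step is to compute the conjugate partition $\lambda'$: the Young diagram of $\lambda$ is the rectangle of height $n-1$ and width $t$, so its conjugate is the rectangle of height $t$ and width $n-1$. Thus $\lambda_j' = n-1$ for $1 \le j \le t$ and $\lambda_j' = 0$ for $j > t$.

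Next I would substitute $\lambda_j' = n-1$ into the formula of Theorem \ref{mu_subgps}, which immediately yields the expression stated in the corollary, provided the tail of the product (for $j > t$) is trivial. To verify this, I would use the fact that containment of partitions is preserved under conjugation: since $\nu \subseteq \lambda$, we have $\nu' \subseteq \lambda'$, so $\nu_j' = 0$ (and $\nu_{j+1}' = 0$) for all $j > t$. Each such factor therefore reduces to $p^0 \pbin{0}{0}_p = 1$, so the infinite product in the corollary is finite and agrees with the substitution.

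There is no real obstacle here; the content of the corollary is entirely in Theorem \ref{mu_subgps}, and the only work is the bookkeeping of the rectangular conjugate partition together with the routine observation that $\nu \subseteq \lambda$ implies $\nu' \subseteq \lambda'$.
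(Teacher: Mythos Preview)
Your proposal is correct and follows essentially the same approach as the paper: set $\lambda = (t,\ldots,t)$ with $n-1$ parts, compute $\lambda' = (n-1,\ldots,n-1)$ with $t$ parts, and substitute into Theorem~\ref{mu_subgps}. Your additional verification that the factors for $j > t$ are trivial is a nice piece of bookkeeping that the paper leaves implicit.
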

\begin{proof}
	Set $\lambda = (t, \ldots, t)$ with the $t$ repeated $n-1$ times. Observe that $\lambda' = (n-1, \ldots, n-1)$ where the term $n-1$ is repeated $t$ times. Then apply Theorem \ref{mu_subgps}.
\end{proof}

\begin{prop}\label{deg_mu_subgps}
	The expression in Corollary \ref{mu_subgps_cor} is a polynomial in $p$ of degree
	$$
	\sum_{j=1}^t \nu_{j}' (n-1-\nu_j').
	$$
\end{prop}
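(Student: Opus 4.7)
The plan is to compute the degree of each factor in the product given in Corollary \ref{mu_subgps_cor} and then sum them. The key auxiliary fact I will use is that the Gaussian binomial coefficient $\pbin{a}{b}_p$ is a polynomial in $p$ of degree $b(a-b)$, which follows directly from its standard product formula.

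First I would fix $j$ and examine the $j$-th factor
$$p^{\nu_{j+1}'((n-1)-\nu_j')}\pbin{(n-1)-\nu_{j+1}'}{\nu_j' - \nu_{j+1}'}_p.$$
The prefactor contributes degree $\nu_{j+1}'((n-1)-\nu_j')$, and by the standard degree formula for Gaussian binomials, the $p$-binomial factor contributes
$$(\nu_j' - \nu_{j+1}')\bigl((n-1-\nu_{j+1}') - (\nu_j' - \nu_{j+1}')\bigr) = (\nu_j' - \nu_{j+1}')(n-1-\nu_j').$$
Adding these and simplifying, the $j$-th factor has degree
$$(n-1-\nu_j')\bigl(\nu_{j+1}' + \nu_j' - \nu_{j+1}'\bigr) = \nu_j'(n-1-\nu_j').$$

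Next I would sum over $j \ge 1$ to obtain the total degree $\sum_{j \ge 1} \nu_j'(n-1-\nu_j')$. To conclude, I would note that since $\nu \subseteq \lambda = (t,\ldots,t)$, the conjugate partition $\nu'$ satisfies $\nu_j' = 0$ for all $j > t$, so every term with $j > t$ vanishes, and the sum truncates to $\sum_{j=1}^{t}\nu_j'(n-1-\nu_j')$, matching the stated expression.

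The argument is essentially an algebraic bookkeeping exercise; the only mildly delicate point is verifying that the cross terms telescope cleanly so that the $\nu_{j+1}'$ dependence disappears, and confirming that the sum is supported on $1 \le j \le t$. Because the underlying formula for the degree of $\pbin{a}{b}_p$ is standard, I do not expect a substantive obstacle beyond careful algebra.
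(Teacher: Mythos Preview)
Your proposal is correct and follows essentially the same approach as the paper: both use that $\deg \pbin{a}{b}_p = b(a-b)$, compute the degree of each factor in the product, and simplify to $\nu_j'(n-1-\nu_j')$ before summing. Your version is slightly more explicit in the algebra and adds the justification for truncating the sum at $j=t$, which the paper leaves implicit.
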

\begin{proof}
	The degree of $\pbin{n}{k}_p$ is equal to $k(n-k)$. Therefore the degree of each term in the product is
	$$
	\nu_{j+1}' (n-1-\nu_j') + (n-1-\nu_{j+1}' - (\nu_j' -\nu_{j+1}'))(\nu_j' - \nu_{j+1}').
	$$
	Simplifying and then adding all terms together gives the result.
\end{proof}

\begin{thm}\label{subgroup_lb}
	Fix $n>1$ and let $\lambda = (t, \ldots, t)$ be the type of the group $G = \left(\Z/p^t\Z\right)^{n-1}$. Let $k \in [0,t(n-1)]$. Set $b = \floor{\frac{k}{t}}$ and $c = \ceil{\frac{k}{t}}$. Then the number of subgroups of $G$ of order $p^k$ is a polynomial in $p$ with degree
	$$ 
k(n-1) -c^2(k+t-ct) - b^2(ct-k).
	$$
\end{thm}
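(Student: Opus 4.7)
The plan is to count subgroups of order $p^k$ by summing the type-by-type counts from Corollary \ref{mu_subgps_cor}, to observe that no cancellation can occur in the sum so its degree equals the maximum of the individual degrees, and then to solve the resulting combinatorial maximum by a convexity argument.

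First, I would express the number of subgroups of order $p^k$ in $G$ as $\sum_\nu N_\nu(p)$, where the sum ranges over partitions $\nu \subseteq \lambda = (t,\ldots,t)$ with $|\nu|=k$, and each $N_\nu(p)$ is the polynomial in $p$ given by Corollary \ref{mu_subgps_cor}. Since each $N_\nu(p)$ is a product of a power of $p$ and Gaussian binomials, it has nonnegative integer coefficients and positive leading coefficient, so the terms for different $\nu$ cannot cancel. Hence the degree of the total polynomial equals $\max_\nu \deg N_\nu(p)$, which by Proposition \ref{deg_mu_subgps} is
$$
\max_{\nu} \sum_{j=1}^{t} \nu_j'(n-1-\nu_j').
$$

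Second, I would pass to the conjugate partition by setting $a_j = \nu_j'$. The constraint $\nu \subseteq \lambda$ translates to $n-1 \ge a_1 \ge a_2 \ge \cdots \ge a_t \ge 0$ and the size constraint to $\sum_{j=1}^{t} a_j = k$; conversely every such sequence arises from a valid $\nu$. Using the identity
$$
\sum_{j=1}^{t} a_j(n-1-a_j) = (n-1)k - \sum_{j=1}^{t} a_j^2,
$$
the problem reduces to \emph{minimizing} $\sum_{j=1}^{t} a_j^2$ over this constraint set.

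Third, I would apply the standard exchange/majorization argument: whenever $a_i - a_j \ge 2$, replacing $(a_i, a_j)$ by $(a_i-1, a_j+1)$ strictly decreases $\sum a_j^2$ while preserving $\sum a_j$ and, after reordering, the decreasing condition. Iterating, the minimum must be attained when every $a_j \in \{b, b+1\}$ with $b = \floor{k/t}$ and $c = \ceil{k/t}$. Writing $s$ for the number of $a_j$ equal to $c$, the sum constraint forces $s = k - tb$; the hypothesis $k \le t(n-1)$ guarantees $c \le n-1$, so the configuration with the $c$'s placed first and the $b$'s last is admissible. Substituting $\min \sum a_j^2 = s c^2 + (t-s) b^2$ and simplifying using the identities $k - tb = k + t - ct$ and $t - s = ct - k$, which hold because $c \in \{b, b+1\}$, produces the stated expression. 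The only delicate step is verifying that the equal-as-possible configuration is admissible throughout the range $k \in [0, t(n-1)]$; the remaining work is the routine degree computation together with the no-cancellation observation.
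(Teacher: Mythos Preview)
Your proposal is correct and follows essentially the same route as the paper: both reduce to maximizing $\sum_{j=1}^{t}\nu_j'(n-1-\nu_j')$ subject to $\sum_j \nu_j' = k$, rewrite this as minimizing $\sum_j (\nu_j')^2$, and conclude that the optimum is the configuration with each $\nu_j'\in\{\lfloor k/t\rfloor,\lceil k/t\rceil\}$. Your argument is in fact slightly more careful than the paper's, since you explicitly justify via nonnegativity of the coefficients of the $N_\nu(p)$ that the degree of the sum equals the maximum of the summand degrees, a point the paper leaves implicit.
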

\begin{proof}
	By Proposition \ref{deg_mu_subgps}, for each $\nu \subseteq \lambda = (t,t, \ldots, t)$, the degree of the polynomial that counts the number of subgroups of type $\nu$ is 
	$$
	\sum_{j=1}^{t} \nu_{j}' (n-1-\nu_j').
	$$
	The number of subgroups of type $\nu \subseteq \lambda$ is equal to the number of subgroups of type $\nu' \subseteq \lambda'$. 
	Therefore we seek to maximize the degree of the polynomials counting subgroups of order $p^k$ in $\left(\Z/p^t\Z\right)^{n-1}$ by considering the subgroups of type $\nu' \subseteq \lambda' = (n-1, \ldots, n-1)$ where the $n-1$ is repeated $t$ times subject to the constraint $\sum_{j=1}^{t} \nu_j' =k$. Observe that the degree is as large as possible when 
	$
	\sum_{j=1}^t (\nu_j')^2
	$
	is minimized subject to the constraint that $\sum_{j=1}^t \nu_j' = k$. Over $\R$, this function is minimized when $\nu_j' = \frac{k}{t}$ for all $j = 1, \ldots t$. However, each $\nu_j'$ must be an integer. Thus take $i$ of the $\nu_j' = \floor{\frac{k}{t}}$ and the rest equal to $\ceil{\frac{k}{t}}$ subject to the constraint $i \floor{\frac{k}{t}} + (t-i) \ceil{\frac{k}{t}} =k$. Then 
	$$
	\sum_{j=1}^t (\nu_j')^2 = i \left\lfloor\frac{k}{t} \right\rfloor^2+ (t-i) \left\lceil\frac{k}{t}\right\rceil^2.
	$$
	Solving the constraint for $i$ gives $i = t \ceil{\frac{k}{t}} - k$. The result follows.
\end{proof}

Theorem \ref{subgroup_lb} gives the degree of the polynomial that counts the number of subgroups of order $p^k$ in $\left(\Z/p^t\Z\right)^{n-1}$ for each $k\in [0,t(n-1)]$. Applying Theorem \ref{subgps_to_subrings} and Theorem \ref{subgroup_lb} gives a lower bound for the number of subrings of index $p^{t(n-1)+k}$ in $\Z^n$ for each $k\in [0, t(n-1)]$. We can now state a lower bound for subrings of index $p^e$ based on these results. Moreover, our result holds for the function that counts subrings in $\OO_K$ of index $p^e$; however for consistency throughout this paper, we state the result for the function $f_n(p^e)$, which counts subrings in $\Z^n$ of index $p^e$.

\begin{cor}\label{subgroup_cor}
	Fix integers $n> 1$ and $e \ge n-1$. Fix an integer $t$ so that $t(n-1) \le e \le 2t(n-1)$. Let $k = e -t(n-1)$. Set $b = \floor{\frac{e}{t}} - (n-1)$ and $c = \ceil{\frac{e}{t}} - (n-1)$. Then 
	$$ 
f_n(p^e) \ge p^{k(n-1) -c^2(k+t-ct) - b^2(ct-k)}.
	$$
\end{cor}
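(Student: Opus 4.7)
The plan is to combine the two main results of this section, Theorem \ref{subgps_to_subrings} and Theorem \ref{subgroup_lb}, with the observation that the lower bound holds for both $f(n,p^e)$ (the count in any $\OO_K$) and $f_n(p^e)$ via Lemma \ref{brakenhoff_lemma} (which the paper has already noted applies to $\Z^n$ with the same proof). So the strategy is: specialize $m$ to an appropriate prime power, apply Theorem \ref{subgps_to_subrings} to translate counting the relevant subgroups of $\OO_K$ into counting subgroups of order $p^k$ in $(\Z/p^t\Z)^{n-1}$, and then plug in Theorem \ref{subgroup_lb}.

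First I would set $m = p^t$ in Theorem \ref{subgps_to_subrings}, which gives a bijection between subgroups $G \subset \OO_K$ with $\Z + p^{2t}\OO_K \subset G \subset \Z + p^t\OO_K$ and index $p^{t(n-1)}\cdot p^k$ in $\OO_K$, and subgroups of order $p^k$ in $(\Z/p^t\Z)^{n-1}$. To get subrings of index $p^e$ we must take $k = e - t(n-1)$. For this to be a legal order, we need $0 \le k \le t(n-1)$, which is exactly the hypothesis $t(n-1) \le e \le 2t(n-1)$. By Lemma \ref{brakenhoff_lemma}, every such subgroup $G$ is a subring, so the count of these subgroups is a valid lower bound for $f_n(p^e)$.

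Next I would invoke Theorem \ref{subgroup_lb} with this value of $k$. That theorem gives a polynomial in $p$ whose degree is
\[
k(n-1) - c_0^{\,2}(k + t - c_0 t) + b_0^{\,2}(c_0 t - k),
\]
where $b_0 = \lfloor k/t \rfloor$ and $c_0 = \lceil k/t \rceil$. The final bookkeeping step is to rewrite $b_0$ and $c_0$ in terms of $e$: since $k/t = e/t - (n-1)$ and $n-1$ is an integer, $\lfloor k/t \rfloor = \lfloor e/t \rfloor - (n-1) = b$ and $\lceil k/t \rceil = \lceil e/t \rceil - (n-1) = c$, matching the statement. Finally I would note that since the polynomial of Theorem \ref{subgroup_lb} has positive leading coefficient (as a count of subgroups it is a positive integer for each prime $p$), its value at $p$ is at least $p^{k(n-1) - c^2(k+t-ct) + b^2(ct-k)}$, yielding the claimed bound on $f_n(p^e)$.

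There is really no obstacle here beyond the index translation: the substance is in Theorem \ref{subgroup_lb}, and this corollary is just its reparametrization in terms of the subring index $e$ rather than the subgroup order $k$. The only point to be a bit careful about is the inequality step at the end, where one must observe that the degree formula from Theorem \ref{subgroup_lb} gives not merely the degree but a genuine lower bound $p^{\deg}$ on the number of subgroups (which follows because the polynomial is a product of nonzero terms of the form $p^A\qbin{r}{s}{p}$ with nonnegative integer coefficients).
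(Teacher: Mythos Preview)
Your proposal is correct and follows essentially the same approach as the paper: the corollary is obtained by specializing $m = p^t$ in Theorem \ref{subgps_to_subrings}, invoking Lemma \ref{brakenhoff_lemma}, and then applying Theorem \ref{subgroup_lb} with the index translation $k = e - t(n-1)$. Your additional remark that the counting polynomial has nonnegative integer coefficients (so that its value at $p$ is at least $p^{\deg}$) is a detail the paper leaves implicit but is indeed needed to pass from ``degree'' to the stated inequality.
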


\begin{rmk}\label{subgrp_rmk}
	The bound in Corollary \ref{subgroup_cor} is as large as possible when $t \mid e$. Observe that the exponent of the bound reduces in this case. We find that if $t(n-1) \le e  \le 2t(n-1)$ and $t \mid e$, then 
	$$
	f_n(p^e) \ge p^{(e - t(n-1))(2(n-1)-\frac{e}{t})}.
	$$
\end{rmk}

Finally, we prove Theorem \ref{fn_bd_subgps}.

\begin{proof}[Proof of Theorem \ref{fn_bd_subgps}] 
	This theorem follows from Corollary \ref{subgroup_cor}, taking a maximum over all $t$ so that $t(n-1) \le e \le 2t(n-1)$.
\end{proof}


\section{A method for counting subring matrices} \label{method_counting}
	We now introduce a combinatorial method for counting subrings due to Liu \cite{liu}. In Section \ref{fn_bound_sect}, we will use this method to find a different bound for subrings of prime power index in $\Z^n$. In this section, we begin by describing Liu's method for counting subrings and we then describe a simpler algorithm for counting subrings based on row reduction.
	
	\begin{defn}
		A $n \times n$ matrix with entries in $\mb{Z}$ is in \emph{Hermite normal form} if $A = (a_{ij})_{i,j}$ is upper triangular and $0 \leq a_{ij} < a_{ii}$ for all $1 \leq i < j \leq n$. 
	\end{defn}
	
	\begin{defn}
	 An $n \times n$ matrix $A$ in Hermite normal form is a \emph{subring matrix} of index $k$ if
	\begin{enumerate}
		\item $\text{det}(A) = k$.
		\item The identity $(1, 1, \ldots, 1)^T$ is in the column span of $A$.
		\item For all $1 \leq i \leq j \leq n$, if $v_i = (v_1, v_2, \ldots, v_n)^T$ and $v_j = (w_1, w_2, \ldots, w_n)^T$ are columns of $A$, then $v_i \circ v_j =( v_1w_1 , v_2 w_2, \ldots ,v_nw_n)^T$ is in the column span of $A$.
	\end{enumerate}
\end{defn}

 A subring matrix $A$ is \emph{irreducible} if $\text{det}(A) = p^e$, every element in the first $n-1$ columns is divisible by $p$, and the last column is $(1, 1, \ldots, 1)^T$. A subring in $\Z^n$ of index $p^e$ is \emph{irreducible} if for every $(v_1, \ldots, v_n) \in \Z^n$, $v_1 \equiv v_2 \equiv \cdots \equiv v_n \pmod{p}$.
 
 Liu justifies the terminology ``irreducible subring" by showing that any subring $S$ of index $p^e$ in $\Z^n$ can be written uniquely as a direct sum of irreducible subrings $S_i$ in $\Z^{n_i}$, see \cite[Theorem 3.4]{liu}.

	In \cite{liu}, Liu shows that there is a bijection between subrings in $\mb{Z}^n$ of index $k$ and subring matrices with determinant $k$. He also shows that there is a bijection between irreducible subrings of $\Z^n$ of index $p^e$ and irreducible subring matrices with determinant $p^e$.
	
 Let $g_n(k)$ be the number of irreducible subrings of index $k$ in $\Z^n$. Observe that $g_n(k)$ is denoted $g_{n+1}(k)$ in \cite{liu}.

\begin{prop}\cite[Proposition 4.4]{liu} \label{recurrence} There is a recurrence relation
$$	f_n(p^e)  = \sum_{i=0}^e \sum_{j=1}^{n} \binom{n-1}{j-1} f_{n-j}(p^{e-i}) g_j(p^i).$$
\end{prop}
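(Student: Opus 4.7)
The plan is to construct a bijection between subrings $S \subseteq \Z^n$ of index $p^e$ and triples $(T, R_1, R_2)$, where $T \subseteq \{1, \ldots, n-1\}$ has size $j-1$ for some $j \in \{1, \ldots, n\}$, $R_1$ is an irreducible subring of $\Z^{T \cup \{n\}} \cong \Z^j$ of index $p^i$, and $R_2$ is a subring of $\Z^{\{1,\ldots,n-1\}\setminus T} \cong \Z^{n-j}$ of index $p^{e-i}$. Once such a bijection is established, counting the factors yields $\binom{n-1}{j-1}$ choices of $T$, $g_j(p^i)$ choices of $R_1$, and $f_{n-j}(p^{e-i})$ choices of $R_2$, and summing over $j$ and $i$ produces the recurrence.

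For the forward map, given $S$ I would introduce the equivalence relation on $\{1, \ldots, n\}$ defined by $i \sim k$ iff $s_i \equiv s_k \pmod p$ for every $s \in S$; this is the partition associated to the $\F_p$-subalgebra $(S+p\Z^n)/p\Z^n \subseteq \F_p^n$. Let $C$ be the class containing $n$, set $T := C \setminus \{n\}$ and $j := |C|$. The crux is to show that the idempotent $\chi_C \in \Z^n$, with coordinate $1$ on $C$ and $0$ elsewhere, belongs to $S$. Since $\chi_C$ is the reduction modulo $p$ of some $s \in S$, I would expand coordinatewise: for $i \in C$ write $s_i = 1 + pt_i$, so that $s_i^{p^k} \equiv 1 \pmod{p^{k+1}}$ by a standard binomial estimate; for $i \notin C$ write $s_i = pu_i$, so that $s_i^{p^k}$ is divisible by $p^{p^k}$. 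Hence $s^{p^k} \equiv \chi_C \pmod{p^{k+1}}$, and taking $k$ with $p^{k+1} \ge p^e$ gives $s^{p^k} - \chi_C \in p^e\Z^n$. Since $[\Z^n : S] = p^e$ implies $p^e \Z^n \subseteq S$ and $s^{p^k} \in S$, we conclude $\chi_C \in S$. Then $\chi_C$ is a central idempotent of $S$, and multiplication decomposes $S = \chi_C S \oplus \chi_{C^c} S$. Identifying $\chi_C \Z^n \cong \Z^C$ and $\chi_{C^c}\Z^n \cong \Z^{C^c}$ realizes $S \cong R_1 \times R_2$ with $R_1 := \pi_C(S) \subseteq \Z^C$ irreducible (immediate from the definition of $C$) and $R_2 := \pi_{C^c}(S) \subseteq \Z^{C^c}$; indices multiply, so $[\Z^C : R_1] = p^i$ forces $[\Z^{C^c} : R_2] = p^{e-i}$.

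For the reverse direction, given $(T, R_1, R_2)$ I would set $C = T \cup \{n\}$ and form $S := R_1 \times R_2$ inside $\Z^C \times \Z^{C^c} = \Z^n$; this is a subring of index $p^e$ containing $(1,\ldots,1)$. I would then verify that $C$ is precisely the $\sim$-class of $n$ in $S$: irreducibility of $R_1$ forces $s_i \equiv s_n \pmod p$ for every $s \in S$ and every $i \in C$, while for $i \in C^c$ the element $(1_{R_1}, 0_{R_2}) = \chi_C \in S$ has coordinate $1$ at $n$ but $0$ at $i$, separating them modulo $p$. Summing $\binom{n-1}{j-1} g_j(p^i) f_{n-j}(p^{e-i})$ over $j \in \{1,\ldots,n\}$ and $i \in \{0,\ldots,e\}$ then gives the formula. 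The main obstacle is producing $\chi_C$ as an honest element of $S$ itself (not merely of $S \otimes \Z_p$); the elementary approach above via the powers $s^{p^k}$ modulo $p^e$ handles this inside $\Z^n$, though one could alternatively invoke Hensel's lemma for the finite $\Z_p$-algebra $S \otimes \Z_p$ together with the identity $S = (S \otimes \Z_p) \cap \Z^n$, which holds for subrings of $p$-power index.
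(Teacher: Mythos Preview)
The paper does not prove this proposition; it is quoted from Liu's work \cite{liu} and used as a black box. So there is no ``paper's own proof'' to compare against.

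Your argument is correct. A few remarks on the one step that deserves care: you assert that $\chi_C$ is the reduction modulo $p$ of some $s \in S$, i.e.\ that $\chi_C$ lies in the $\F_p$-subalgebra $\bar S := (S + p\Z^n)/p\Z^n$ of $\F_p^n$. This is true, but it uses the structure of unital $\F_p$-subalgebras of $\F_p^n$: such a subalgebra is reduced and finite, hence a product of finite extensions of $\F_p$; each coordinate projection $\bar S \to \F_p$ is surjective (it sends $1$ to $1$), so every factor is $\F_p$ itself, and $\bar S$ is exactly the set of tuples constant on the blocks of the partition it determines. Your equivalence relation $\sim$ recovers that partition, and the block idempotents (in particular $\chi_C$) lie in $\bar S$. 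Your parenthetical ``this is the partition associated to the $\F_p$-subalgebra'' signals you know this, but in a written proof it is worth one sentence.

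The lifting step is clean: $(1+pt)^{p^k} \equiv 1 \pmod{p^{k+1}}$ by the usual induction, $(pu)^{p^k} \equiv 0 \pmod{p^{p^k}}$ with $p^k \ge k+1$, and $p^e\Z^n \subseteq S$ because $\Z^n/S$ has exponent dividing $p^e$. The inverse map and the verification that the $\sim$-class of $n$ in $R_1 \times R_2$ is exactly $T \cup \{n\}$ (using irreducibility of $R_1$ for one inclusion and $\chi_C \in S$ for the other) complete the bijection. This is essentially the argument Liu gives.
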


	By the above recurrence relation, to understand $f_n(p^e)$, it suffices to understand $g_j(p^i)$ for each prime $p$, $j \leq n$ and $i \leq e$. In particular, we study the number of irreducible subring matrices of index $p^e$. These matrices have the following form
	$$
	A = \begin{pmatrix}
	p^{e_1} & pa_{12} & pa_{13} & \cdots & pa_{1(n-1)}&1 \\
	& p^{e_2} & pa_{23} & \cdots &pa_{2(n-1)} &1\\
	&& p^{e_3} & \cdots & pa_{3(n-1)} &1\\
	&&& \ddots & \vdots& \vdots\\
	&&&&p^{e_{n-1}} &1\\
	&&&&&1
	\end{pmatrix}
	$$
	where $0 \leq a_{ij} < p^{e_i-1}$ for each $1\leq i < j \leq n-1$. 
	\begin{defn}
	We say that the matrix $A$ as written above has \textit{diagonal} $(p^{e_1}, \ldots, p^{e_{n-1}}, 1)$.
	\end{defn}

Let $C_{n,e}$ denote the number of compositions of $e$ into $n-1$ parts. The diagonals corresponding to irreducible subring matrices with determinant $p^e$ are in bijection with the compositions in $C_{n,e}$. If a matrix has diagonal $(p^{e_1}, \ldots, p^{e_{n-1}}, 1)$, abusing notation we also say the matrix has diagonal $\alpha =(e_1, \ldots, e_{n-1}) \in C_{n,e}$. Let $g_\alpha(p)$ be the number of irreducible matrices with diagonal $\alpha \in C_{n,e}$. Then
	$$
	g_n(p^e) = \sum_{\alpha \in C_{n,e}} g_\alpha(p).
	$$
	
	The authors of \cite{liu} and \cite{akkm} prove that $f_n(p^e)$ is polynomial in $p$ when $e \leq 8$ and $n  > 0$ by providing exact formulas for the number of irreducible subrings with each possible diagonal and then using the recurrence relation from Proposition \ref{recurrence}. 
	
	Let $\text{Col}(A)$ denote the $\Z$-column span of the matrix $A$. For each $\alpha \in C_{n,e}$, the authors of \cite{liu} and \cite{akkm} determine formulas for $g_\alpha(p)$ by considering all closure conditions $v_i \circ v_j \in \text{Col}(A)$ explicitly in order to find conditions on the variables $a_{ij}$.  In general, finding all the closure conditions for a given diagonal can be complicated.
	
	\begin{example}\cite[Page 19]{akkm}
		Let $\alpha = (3, 2, 1, 1)$. The corresponding matrix is
		$$
		\begin{pmatrix}
		p^3 & pa_{12} & pa_{13} & pa_{14} & 1\\
		& p^2  & pa_{23}&pa_{24} & 1\\
		&& p &0 & 1\\
		&&&p&1\\
		&&&&1
		\end{pmatrix}
		$$
		with $a_{1j} \in [0,p^2)$ for $j=2,3,4$ and $a_{2j} \in [0,p)$ for $j =3,4$. Atanasov et. al illustrate their method for finding the closure conditions.  For example, consider $v_2 \circ v_2 = (p^2a_{12}^2, p^4 , 0 ,0 ,0)^T$. They note that $v_2 \circ v_2$ must be a linear combination of the first two columns, so it suffices to understand how to write $(p^2a_{12}^2, p^4)^T$ as a linear combination of $(p^3 , 0)^T$ and $(pa_{12}, p^2)^T$. We must take $p^2$ times this second column, so we obtain
		\begin{equation}\label{closure_cond_ex}
		\begin{pmatrix}
		p^2a_{12}^2\\p^4 
		\end{pmatrix} = \lambda\begin{pmatrix}
		p^3\\0
		\end{pmatrix}
		+ 
		p^2\begin{pmatrix}
		pa_{12}\\p^2
		\end{pmatrix}
		\end{equation}
for some $\lambda \in \Z$. In order for Equation \ref{closure_cond_ex} to hold, $p^3 \mid p^2a_{12}$ and thus $p\mid a_{12}$. Thus the closure condition corresponding to $v_2 \circ v_2$ is $a_{12} \equiv 0 \pmod{p}$. Atanasov et al. compute the closure conditions for other pairs of columns using similar methods and noting that the equations are simpler when they replace $a_{12}$ with $pa_{12}'$. The final conditions are 
	\begin{align*}
		&(a_{13}^2 - a_{13}) - (a_{23}^2-a_{23})a_{12}' \equiv 0 \pmod{p}\\
		&(a_{14}^2 - a_{14}) - (a_{24}^2-a_{24})a_{12}' \equiv 0 \pmod{p}\\
		& a_{13}a_{14} -a_{23}a_{24}a_{12}'\equiv 0 \pmod{p}.
	\end{align*}
	\end{example}

	 We show that by using the method of row reduction, we can find these conditions in an algorithmic way. This method has two advantages. First, it simplifies proofs about counting the number of irreducible subrings with a given diagonal. Second, since it is algorithmic, it is easily implementable in a computer algebra system.
	
	Recall that the closure conditions are of the form $v_i \circ v_j\in \text{Col}(A)$ for each $1 \leq i\leq j\leq n$. These conditions are satisfied if and only if $A \vec{x}_{i, j} = v_i \circ v_j$ has a solution $\vec{x}_{i, j} \in \mb{Z}^n$ for every $1\leq i \leq j \leq n$. By basic linear algebra, a solution $\vec{x}_{i,j}$ exists if and only if the last column in the reduced echelon form of the matrix $[A \; v_i \circ v_j]$ has integer entries. We illustrate the row reducing steps below. Note that we omit the column $(1, 1, \ldots, 1)^T$ corresponding to the identity in $\Z^n$ since it is clear that $v_i \circ (1, 1, \ldots, 1)^T$ is in the $\Z$-column span of $A$ for all $1 \leq i \leq n$ and if $i,j \ne n$, then the $n^{th}$ entry in $v_i \circ v_j$ is 0.	Suppose we have a matrix
	$$
	A = \begin{pmatrix}
	p^{e_1} & pa_{12} & pa_{13} & \cdots & pa_{1(n-1)} & x_1\\
	& p^{e_2} & pa_{23} & \cdots &pa_{2(n-1)} & x_2\\
	&& p^{e_3} & \cdots & pa_{3(n-1)} & x_3\\
	&&& \ddots & & \vdots\\
	&&&&p^{e_{n-1}} & x_{n-1}
	\end{pmatrix}
	$$

	\noindent where the last column contains the first $n-1$ entries of the vector $v_i \circ v_j = (x_1, x_2, \ldots, x_{n-1}, x_n)^T$ for some pair $(i,j)$ so that $1 \leq i \leq j \leq n-1$.
	We begin by dividing each row $i$ by $p^{e_i}$ in order to make every diagonal entry equal to 1, obtaining the matrix
	
	$$
	A \rightarrow
	\begin{pmatrix}
	1 & \frac{pa_{12}}{p^{e_1}} & \frac{pa_{13}}{p^{e_1}} & \cdots & \frac{pa_{1(n-1)}}{p^{e_1}} & \frac{x_1}{p^{e_1}}\\[6pt]
	& 1 & \frac{pa_{23}}{p^{e_2}} & \cdots &\frac{pa_{2(n-1)}}{p^{e_2}} &\frac{x_2}{p^{e_2}}\\[6pt]
	&& 1 & \cdots & \frac{pa_{3(n-1)}}{p^{e_3}} & \frac{x_3}{p^{e_3}}\\[6pt]
	&&& \ddots & & \vdots\\[6pt]
	&&&&1 & \frac{x_{n-1}}{p^{e_{n-1}}}
	\end{pmatrix}.
	$$
	
	Observe that since the matrix is upper triangular, it is not necessary to make any more divisions. From here, we row reduce and we note that finding conditions on $a_{ij}$ so that $v_i \circ v_j$ is in the $\Z$-span of the first $n-1$ columns is equivalent to finding conditions on $a_{ij}$ so that the entries in the last column are in $\Z$. Thus, for each fixed $v_i \circ v_j$, we are counting the number of solutions to expressions of the form
	\begin{equation}
	\label{conditions}
	h_{ij}^{(c)} = \frac{f_{ij}^{(c)}(\vec{a_{ij}})}{p^{r}} \in \mb{Z}
	\end{equation}
	where $f_{ij}^{(c)}$ is a multivariate polynomial with coefficients in $\Z$, $1 \leq c \leq n-1$, and $r$ is an integer depending on $i, j,$ and $c$. Note that for this fixed $v_i \circ v_j$, $x_{i+1} = x_{i+2} = \cdots = x_{n-1} = 0$. This allows us to use the row reduction process on the smaller $i \times (i+1)$ matrix formed by taking the first $i$ columns of $A$, augmenting $v_i \circ v_j$, and then removing the rows $i+1, \ldots, n-1$. We will make use of this simplification in Section \ref{fn_bound_sect}.
	
	Let $p^r$ be the largest denominator that occurs in all expressions $h_{ij}^{(c)}$ in Equation \ref{conditions}. Then the rational functions $h_{ij}^{(c)}$ are in $\mb{Z}$ if and only if the numerators $f_{ij}^{(c)}\equiv 0 \mod{p^r}$. Thus counting the simultaneous system $h_{ij}^{(c)} \in \mb{Z}$ is essentially the same as counting points to a simultaneous vanishing of the polynomials $f_{ij}^{(c)}$ modulo $p^r$. Note that these problems are not exactly the same as the variables $a_{ij}$ live in the range $[0, p^{e_i-1})$ and we may have $e_i -1 > r$. However, since all expressions have denominator at worst $p^r$, then the congruence conditions for the variables only matter modulo $p^r$. For any $e_i -1 > r$, we can simply multiply the point count for the variety by $p^{e_i-1-r}$. Thus up to polynomial factors, it is sufficient to understand the varieties defined over $\mb{Z}/ p^r\mb{Z}$ defined by the polynomials $f_{ij}^{(c)}$.
	
	\begin{rmk}
		In general, counting points on varieties over a ring is a difficult problem. It is easier to consider varieties over $\mb{F}_p$, however these denominators that occur are often larger than $p$. Sometimes it is possible to reduce the denominators to all be $p$ by using clever substitution. In these cases, we can say more about the point counts of the varieties.
	\end{rmk}

\noindent We summarize the method described in this section as follows.
\begin{alg}
	Input a diagonal $(e_1, \ldots, e_{n-1})$ with integers $e_i \ge 0$ for all $1 \le i \le n-1$.
	\begin{enumerate}
\item Create the matrix
	$$
	A = \begin{pmatrix}
	p^{e_1} & pa_{12} & pa_{13} & \cdots & pa_{1(n-1)}&1 \\
	& p^{e_2} & pa_{23} & \cdots &pa_{2(n-1)} &1\\
	&& p^{e_3} & \cdots & pa_{3(n-1)} &1\\
	&&& \ddots & \vdots& \vdots\\
	&&&&p^{e_{n-1}} &1\\
	&&&&&1
\end{pmatrix}
	$$
	in the variables $a_{ij}$ for $1 \le i < j \le n-1$. If $e_i =0$ or 1, set $a_{ij} =0$ for all $i < j \le n-1$.
\item For each $v_i \circ v_j$ with $1 \le i \le j \le n-1$, row reduce $[A \, v_i \circ v_j]$ to $A'$ over $\Q$. Add the entries of the rightmost column of $A'$ to a list.
\item Return the list formed in Step 2. All elements in this list are of the form $\frac{f_{ij}(\vec{a}_{ij})}{p^r}$ for some $r \ge 0$.
\end{enumerate}
\end{alg}

\section{A new lower bound for $f_n(p^e)$ via irreducible subring matrices}\label{fn_bound_sect}
 We now provide a lower bound for the number of subrings in $\Z^n$ of index $p^e$ using techniques from Section \ref{method_counting}. That is, we find a lower bound for $f_n(p^e)$ by bounding the number of \emph{irreducible} subrings of index $p^e$ in $\Z^n$. These results will lead to a new lower bound for the abscissa of convergence of $\zeta_{\Z^n , \, p}^R(s)$, which will be discussed in Section \ref{asymptotic_section}.

\subsection{Bounding the number of irreducible subring matrices}

Fix an integer $d \in [0,n-1]$ and let $k, \ell$ be positive integers so that $\ell \ge \ceil{\frac{k}{2}}$. Let $C_{n, d, k, \ell}$ denote the set of compositions of $kd + \ell(n-1-d)$ into $n-1$ parts that contain exactly $d$ terms equal to $k$ and $n-1-d$ terms equal to $\ell$.  In other words, each $\alpha \in C_{n,d,k,\ell}$ is a permutation of the composition $(k, k, \ldots, k, \ell, \ell, \ldots, \ell)$ with $k$ appearing $d$ times and $\ell $ appearing $n-1-d$ times.
\begin{prop}\label{21_prop}
Let $n >1$. Fix an integer $d \in [0,n-1]$ and let $k, \ell$ be positive integers so that $\ell \ge \ceil{\frac{k}{2}}$. For a fixed $\alpha \in C_{n,d,k,\ell}$, let $A_\alpha$ be a matrix in Hermite normal form with diagonal $\alpha$ that satisfies the following conditions for each pair $1 \le i< j \le n-1$:
\begin{enumerate}
	\item  if $a_{ii} = p^k$ and $a_{jj} = p^\ell$, then $a_{ij} \equiv 0 \pmod{p^{\ceil{\frac{k}{2}}}}$
	\item otherwise, $a_{ij} =0$.
\end{enumerate}
Then $A_\alpha$ is an irreducible subring matrix.
\end{prop}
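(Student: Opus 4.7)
The plan is to partition the indices $\{1,\ldots,n-1\}$ into $K = \{i : a_{ii} = p^k\}$ (of size $d$) and $L = \{i : a_{ii}=p^\ell\}$ (of size $n-1-d$), and analyze the columns of $A_\alpha$ according to this partition. The three defining conditions for an irreducible subring matrix that do not involve closure (determinant equal to $p^{kd+\ell(n-1-d)}$, every off-diagonal entry in the first $n-1$ columns divisible by $p$ since $\lceil k/2\rceil\ge 1$, and final column $(1,\ldots,1)^T$) will follow immediately from the construction, so the real work is to verify that $v_i\circ v_j\in\mathrm{Col}(A_\alpha)$ for every $1\le i\le j\le n$.

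The key structural observation is that the prescribed sparsity pattern forces an especially clean shape on the columns. Writing $\mathbf{e}_m$ for the $m$-th standard basis vector, one finds that for $i\in K$ every subdiagonal entry of $v_i$ vanishes, since the rule ``nonzero $a_{mi}$ requires $a_{mm}=p^k$ and $a_{ii}=p^\ell$'' fails when $a_{ii}=p^k$; hence $v_i=p^k\mathbf{e}_i$. For $i\in L$ the same rule forces $v_i = p^\ell\mathbf{e}_i + \sum_{m\in K,\, m<i} a_{mi}\mathbf{e}_m$ with each $a_{mi}$ a multiple of $p^{\lceil k/2\rceil}$. In particular the columns $\{v_m:m\in K\}$ span over $\Z$ exactly the integer vectors supported on $K$ whose entries are multiples of $p^k$, and this will be the main tool for absorbing off-diagonal contributions.

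With this setup in hand, closure reduces to a short case split. The case $j=n$ is trivial because $v_n=(1,\ldots,1)^T$. For $1\le i\le j\le n-1$, the cases $i\in K$ (where $v_i\circ v_j = p^k(v_j)_i\mathbf{e}_i$ is an integer multiple of $v_i$), $i\in L$ and $j\in K$ with $j>i$ (where $(v_i)_j=0$ by upper triangularity), and $i<j$ with $i,j\in L$ (where the product is supported on $K\cap\{1,\ldots,i-1\}$ with each coefficient divisible by $p^{2\lceil k/2\rceil}\ge p^k$, hence an integer combination of the $v_m=p^k\mathbf{e}_m$) are all immediate. The one case requiring genuine work, and the place where the hypothesis $\ell\ge\lceil k/2\rceil$ enters, is $i=j\in L$: here
$$v_i\circ v_i = p^{2\ell}\mathbf{e}_i + \sum_{m\in K,\, m<i} a_{mi}^2\,\mathbf{e}_m,$$
and subtracting $p^\ell v_i$ clears the diagonal contribution and leaves $\sum_{m\in K,\, m<i} a_{mi}(a_{mi}-p^\ell)\mathbf{e}_m$. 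Writing $a_{mi}=p^{\lceil k/2\rceil}c_m$ and using $\ell\ge\lceil k/2\rceil$ to factor a further $p^{\lceil k/2\rceil}$ from $a_{mi}-p^\ell$ yields divisibility of each coefficient by $p^{2\lceil k/2\rceil}\ge p^k$, so the remainder lies in the $\Z$-span of $\{v_m:m\in K\}$ and closure holds. This last divisibility is the crux of the argument and explains exactly why $\ell\ge\lceil k/2\rceil$ appears in the hypothesis.
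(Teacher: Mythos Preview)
Your argument is correct and follows essentially the same strategy as the paper's proof: both partition the indices into the $K$- and $L$-sets, observe that $v_i=p^k\mathbf{e}_i$ for $i\in K$, and then run a short case analysis on $v_i\circ v_j$. Your handling of the diagonal case $i=j\in L$ is in fact slightly cleaner than the paper's---you subtract $p^\ell v_i$ explicitly and factor $a_{mi}(a_{mi}-p^\ell)$, which makes the role of the hypothesis $\ell\ge\lceil k/2\rceil$ completely transparent, whereas the paper's displayed condition in that case carries a small typo (the exponent on the second term should reflect $p^{\ell+\lceil k/2\rceil}$ rather than $p^{2\lceil k/2\rceil}$). One small remark: your case ``$i\in L$, $j\in K$, $j>i$'' is already subsumed by your case ``$i\in K$'' via the symmetry $v_i\circ v_j=v_j\circ v_i$, so it could be omitted; and the partition $K\sqcup L$ tacitly assumes $k\ne\ell$, which is also implicit in the paper's argument.
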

\begin{proof}
	Let $A_\alpha$ be as described in the statement of the proposition. We must show that $v_i \circ v_j \in \text{Col}(A_\alpha)$ for all $ 1\leq i \leq j \leq n$.  Let $r_1, \ldots, r_d$ be the columns containing $p^k$. There are three cases to consider. 
	
	 First, suppose that $i=r_m$ for some integer $m \in [1,d]$. Then $v_i = (0 ,\cdots, 0 ,p^k , 0 ,\cdots ,0)^T$. Fix some integer $j \in [1,n]$. Suppose the $i^{th}$ entry of $v_j$ is equal to $x$. Then $v_i \circ v_j = x v_i \in \text{Col}(A_\alpha)$. Notice that we made no assumption about the entries of $v_j$. 
	
	Second, suppose the $i^{th}$ entry of $v_i$ is $p^\ell$ and let $j > i$ such that $j \not \in \{r_1, \ldots, r_d\}$. The only possible nonzero entries in $v_i \circ v_j$ are in the rows $\{i_1, \ldots, i_d\}$. Therefore $v_i \circ v_j \in \text{Col}(A_\alpha)$ if and only if $v_i \circ v_j$ is a linear combination of $v_{r_1}, \ldots, v_{r_d}$. By applying the technique established in Section \ref{method_counting}, we need to understand whether the following augmented matrix has integer solutions
	$$A_\alpha' = 
\begin{pmatrix}
	p^k& 0 & \cdots & 0 &  p^{2\ceil{\frac{k}{2}}} a_{r_1i}a_{r_1j}\\
	& p^k & \cdots & 0  & p^{2\ceil{\frac{k}{2}}}a_{r_2i}a_{r_2j}\\
	&& \ddots & \vdots & \vdots\\
	&&&p^k  & p^{2\ceil{\frac{k}{2}}}a_{r_di}a_{r_dj}
\end{pmatrix}.
$$
Observe that $v_i \circ v_j \in \text{Col}(A_\alpha)$ if and only if $$v=(p^{2\ceil{\frac{k}{2}}}a_{r_1i}a_{r_1j}\,,\, p^{2\ceil{\frac{k}{2}}}a_{r_2i}a_{r_2j} \, ,\, \cdots\, ,\, p^{2\ceil{\frac{k}{2}}}a_{r_di}a_{r_dj})^T\in \text{Col}(A_\alpha').$$
By applying the row reduction technique, we see that $v \in \text{Col}(A_\alpha')$ if and only if 
$$
p^k \mid  p^{2\ceil{\frac{k}{2}}} a_{r_mi}a_{r_mj}
$$
for each integer $m \in [1,d]$. This condition holds for all $m \in [1,d]$.
	

	Finally, suppose that $v_i$ has a $p^\ell$ in the $i^{th}$ entry and consider $v_i \circ v_i$. Then $v_i \circ v_i \in \text{Col}(A_\alpha)$ if and only if $v_i \circ v_i$ is a linear combination of $v_i$ and $v_{r_1}, \ldots, v_{r_d}$. Consider the matrix
	$$A_\alpha' = 
	\begin{pmatrix}
	p^k& 0 & \cdots & 0 & p^{\ceil{\frac{k}{2}}} a_{r_1 i} & p^{2\ceil{\frac{k}{2}}} a_{r_1i}^2\\
	& p^k & \cdots & 0 & p^{\ceil{\frac{k}{2}}} a_{r_2i} & p^{2\ceil{\frac{k}{2}}}a_{r_2i}^2\\
	&& \ddots & \vdots & \vdots & \vdots\\
	&&&p^k &p^{\ceil{\frac{k}{2}}}a_{r_di} & p^{2\ceil{\frac{k}{2}}}a_{r_di}^2\\
	&&&&p^\ell& p^{2\ell}
	\end{pmatrix}.
	$$
 After applying the row reduction method, $v_i \circ v_i \in \text{Col}(A_\alpha)$ if and only if $$p^k \mid \left(p^{2\ceil{\frac{k}{2}}} a_{r_mi}^2 - p^{\ell+\ceil{\frac{k}{2}}}a_{r_mi}\right)$$ for all integers $m \in [1,d]$. This holds for all possible choices of $a_{r_mi}$ since $\ell \ge \ceil{\frac{k}{2}}$.
\end{proof}
 
\begin{example}\label{ex_alpha_1}
	Let $\alpha = (2, 1, 2, 1, 2)$. The matrix $A_\alpha$ corresponding to Proposition \ref{21_prop} has the form
	$$
	\begin{pmatrix}
	p^2 & pa_{12} & 0 & pa_{14} & 0 & 1\\
	&p & 0 & 0 & 0 &1\\
	&&p^2&pa_{34} & 0& 1\\
	&&&p & 0 &1\\
	&&&&p^2&1\\
	&&&&&1
	\end{pmatrix}.
	$$
	There are exactly $p^3$ irreducible subring matrices of this form since $A_\alpha$ is a subring matrix for any choice of $a_{12}, a_{14}, a_{34} \in [0,p)$.
\end{example}

\begin{example} \label{ex_alpha_2}
	Let $\alpha = (3, 5, 3, 3, 5)$. Setting $k=5$, we have $\ceil{\frac{k}{2}} = 3$. The matrix $A_\alpha$ corresponding to Proposition \ref{21_prop} has the form
	$$
	\begin{pmatrix}
	p^3 & 0 & 0 & 0 & 0 & 1\\
	&p^5 & p^3a_{23} & p^3a_{24} &0 &1\\
	&&p^3&0 & 0& 1\\
	&&&p^3 & 0&1\\
	&&&&p^5&1\\
	&&&&&1
	\end{pmatrix}.
	$$
	When $i =2$, $a_{ij} \in [0, p^2)$. There are exactly $p^4$ such irreducible subrings matrices.
\end{example}

We now discuss a method for computing the number of subring matrices that have the form given in Proposition \ref{21_prop}.

\begin{defn}
	A \emph{north-east lattice path} $P$ is a path in $\Z^2$ starting at the origin and ending at $(u,v)$ so that every step in the path is either a step one unit to the north or one unit to the east. 
	
	The \emph{area} of a path is the area enclosed by the path, the $x$- and $y$-axes, and the line $x =u$. Denote the area by $\text{Area}(P)$.
\end{defn}

Fix $\alpha \in C_{n,d,k,\ell}$ and let $A_{\alpha}$ be a matrix as in Proposition $\ref{21_prop}$. Let $P_\alpha$ denote the lattice path from $(0,0)$ to $(n-1-d,d)$ so that the $i^{th}$ step in the path is a northerly step if the $i^{th}$ entry of $\alpha$ is $k$ and is an easterly step if the $i^{th}$ entry of $\alpha$ is $\ell$.

\begin{thm}\label{21_num}
	Let $\alpha \in C_{n, d, k, \ell}$ and let $A_\alpha$ be as in Proposition \ref{21_prop}. Then the number of such matrices $A_\alpha$ is equal to $p^{(k - \ceil{\frac{k}{2}})\cdot\text{Area}(P_\alpha)}$. 
\end{thm}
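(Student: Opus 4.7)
The plan is to count matrices $A_\alpha$ by factoring the count over the free off-diagonal entries and then identifying this count with $\text{Area}(P_\alpha)$. By Proposition~\ref{21_prop}, for $i < j$ the only entries not forced to be zero are those $(i,j)$ with $a_{ii} = p^k$ and $a_{jj} = p^\ell$, and at each such position the entry must be divisible by $p^{\ceil{\frac{k}{2}}}$. Combined with the Hermite normal form requirement $0 \le a_{ij} < a_{ii} = p^k$, a free entry must lie in $\{0, p^{\ceil{\frac{k}{2}}}, 2p^{\ceil{\frac{k}{2}}}, \ldots, p^k - p^{\ceil{\frac{k}{2}}}\}$, giving exactly $p^{k - \ceil{\frac{k}{2}}}$ choices.

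Let $N_\alpha$ denote the number of pairs $(i,j)$ with $i < j$, $\alpha_i = k$, and $\alpha_j = \ell$. Since the free entries can be chosen independently, the total count of matrices $A_\alpha$ is $p^{(k - \ceil{\frac{k}{2}}) N_\alpha}$. It then suffices to show $N_\alpha = \text{Area}(P_\alpha)$. The approach is to compute the area column by column: each east step of $P_\alpha$ contributes to the area a unit-width rectangle whose height equals the $y$-coordinate at which the step begins, which is precisely the number of north steps taken beforehand. Summing over east steps yields $\text{Area}(P_\alpha) = \#\{(i,j) : i < j,\ \text{step } i \text{ is north},\ \text{step } j \text{ is east}\}$. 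By the definition of $P_\alpha$, step $i$ is north iff $\alpha_i = k$ and step $j$ is east iff $\alpha_j = \ell$, so this count is exactly $N_\alpha$, completing the proof.

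The only real subtlety lies in correctly reconciling the two constraints on each free entry -- divisibility by $p^{\ceil{\frac{k}{2}}}$ versus the Hermite bound $a_{ij} < p^k$ -- so that each contributes a factor of $p^{k - \ceil{\frac{k}{2}}}$ (which requires $\ceil{\frac{k}{2}} \le k$, trivially true). It is reassuring to sanity-check the formula against Examples~\ref{ex_alpha_1} and \ref{ex_alpha_2}: in the former, $(k,\ell) = (2,1)$ and the lattice path has area $3$, yielding $p^{(2-1) \cdot 3} = p^3$, while in the latter $(k,\ell) = (5,3)$ and the lattice path has area $2$, yielding $p^{(5-3) \cdot 2} = p^4$, both matching the stated counts.
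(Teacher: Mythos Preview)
Your proof is correct and follows essentially the same approach as the paper's: count the free entries, note each contributes a factor of $p^{k-\ceil{k/2}}$, and identify the number of free positions with $\text{Area}(P_\alpha)$. The paper states the equality $\text{Area}(P_\alpha)=\#\{i<j:\alpha_i=k,\ \alpha_j=\ell\}$ as an observation; your column-by-column area computation simply spells this out.
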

\begin{proof}
	Observe that a non-diagonal element $a_{ij} \in A_\alpha$ is nonzero if and only if $i < j$ and $\alpha_i = k, \alpha_j = \ell$. In this case any choice of $a_{ij} \in [0, p^{k - \ceil{\frac{k}{2}}})$ leads to an irreducible subring matrix. By the definition of $P_\alpha$, we see that
	$$
	\text{Area}(P_\alpha) = \#\{i < j\, : \, \alpha_i = k \text{ and } \alpha_j = \ell\}.
	$$
	Therefore the number of irreducible subring matrices $A_\alpha$ satisfying the conditions in Proposition \ref{21_prop} is 
	$$\left(p^{(k - \ceil{\frac{k}{2}})}\right)^{\text{Area}(P_\alpha)}.$$

\end{proof}

\begin{cor}
	For each $\alpha \in C_{n, d, k, \ell}$, we have $g_\alpha(p) \ge p^{(k - \ceil{\frac{k}{2}})\cdot\text{Area}(P_\alpha)}$.
\end{cor}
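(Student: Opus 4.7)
The plan is to combine Proposition \ref{21_prop} and Theorem \ref{21_num} directly, observing that together they exhibit a specific family of irreducible subring matrices with diagonal $\alpha$ whose cardinality is exactly the claimed lower bound.

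More precisely, I would first recall that $g_\alpha(p)$ is by definition the total number of irreducible subring matrices with diagonal $\alpha$. Proposition \ref{21_prop} shows that for any choice of off-diagonal entries $a_{ij}$ satisfying $a_{ij} \equiv 0 \pmod{p^{\ceil{k/2}}}$ in the slots where $a_{ii} = p^k$ and $a_{jj} = p^\ell$ with $i<j$, and $a_{ij} = 0$ elsewhere (off the main diagonal and off the final identity column), the resulting matrix $A_\alpha$ is an irreducible subring matrix. Therefore every such $A_\alpha$ contributes to $g_\alpha(p)$, and the inequality
\[
g_\alpha(p) \;\ge\; \#\{A_\alpha : A_\alpha \text{ satisfies the conditions of Proposition \ref{21_prop}}\}
\]
holds.

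Next, I would invoke Theorem \ref{21_num}, which computes this cardinality exactly as $p^{(k-\ceil{k/2})\cdot \text{Area}(P_\alpha)}$. Chaining this equality with the inequality above yields
\[
g_\alpha(p) \;\ge\; p^{(k-\ceil{k/2})\cdot \text{Area}(P_\alpha)},
\]
which is the claim. There is no genuine obstacle here: the corollary is a straightforward repackaging of the two preceding results, with Proposition \ref{21_prop} providing the qualitative fact (the matrices are subring matrices) and Theorem \ref{21_num} providing the quantitative count. The only subtle point worth verifying is that each matrix produced by Proposition \ref{21_prop} is in Hermite normal form with the correct diagonal $\alpha$, so that distinct choices of the $a_{ij}$'s yield distinct subring matrices and hence are counted without overlap in $g_\alpha(p)$; this is immediate from the constraint $a_{ij} \in [0, p^{e_i - 1})$ inherent in Hermite normal form, which is compatible with the imposed congruence $a_{ij} \equiv 0 \pmod{p^{\ceil{k/2}}}$.
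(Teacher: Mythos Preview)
Your proposal is correct and matches the paper's approach exactly: the corollary is stated in the paper without proof because it is an immediate consequence of Proposition \ref{21_prop} (the matrices are irreducible subring matrices with diagonal $\alpha$) together with Theorem \ref{21_num} (there are exactly $p^{(k-\ceil{k/2})\cdot\text{Area}(P_\alpha)}$ of them). Your remark about distinct choices of $a_{ij}$ yielding distinct matrices in Hermite normal form is a nice point of hygiene, but it is already built into the count in Theorem \ref{21_num}.
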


\begin{example}
	Let $\alpha = (2, 1, 2, 1, 2)$. The north-east lattice path $P_\alpha$ goes from $(0,0)$ to $(2,3)$, following the steps: north, east, north, east, north. This path is depicted below.
	
	\begin{center}
		\begin{tikzpicture}[scale=0.7]
		\foreach \i in {0,...,2}{
			\foreach \j in {0,...,3}{
				\draw (\i,0)--(\i,3);
				\draw (0,\j)--(2,\j);
			}
		}
		\draw[ultra thick] (0,0)--(0,1)--(1,1)--(1,2)--(2,2)--(2,3);
		\end{tikzpicture}
	\end{center}
	
	\noindent The area of this path is equal to 3 and $k - \ceil{\frac{k}{2}} = 1$, verifying our claim in Example \ref{ex_alpha_1} that there are exactly $p^3$ irreducible subring matrices that have diagonal $\alpha$ and satisfy the properties listed in Proposition \ref{21_prop}.
\end{example}

\begin{rmk} \label{main_term_rmk}
For each $\alpha \in C_{n, d, k, \ell}$ let $A_\alpha$ be a matrix satisfying Proposition \ref{21_prop}. Let $P_\alpha$ be the corresponding north-east lattice path. Set $\gamma = (k, k, \ldots, k, \ell, \ldots, \ell)\in C_{n,d,k,\ell}$. Observe that $\text{Area}(P_\gamma) \geq \text{Area}(P_\alpha)$ for all $\alpha \in C_{n, d, k, \ell}$. Therefore the degree of the main term in our bound is always equal $(k - \ceil{\frac{k}{2}}) \text{Area}(P_\gamma)$.
\end{rmk}

\begin{cor} \label{fn_bound_cor}
	Set $\gamma = (k, \ldots, k, \ell, \ldots, \ell)$. Then $g_\gamma(p) \ge p^{(k - \ceil{\frac{k}{2}}) d(n-1-d)}$.
\end{cor}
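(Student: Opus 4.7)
The plan is to deduce Corollary \ref{fn_bound_cor} directly from Theorem \ref{21_num} by computing the area of the particular lattice path $P_\gamma$ associated to the specific composition $\gamma = (k, \ldots, k, \ell, \ldots, \ell)$.

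First I would describe $P_\gamma$ explicitly. Since the first $d$ entries of $\gamma$ equal $k$ and the remaining $n-1-d$ entries equal $\ell$, the construction of $P_\alpha$ given just before Theorem \ref{21_num} tells us that $P_\gamma$ consists of $d$ consecutive north steps followed by $n-1-d$ consecutive east steps. This traces an ``L-shaped'' path from $(0,0)$ to $(0,d)$ to $(n-1-d,d)$.

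Next I would compute $\text{Area}(P_\gamma)$. The region bounded by $P_\gamma$, the $x$-axis, the $y$-axis, and the vertical line $x = n-1-d$ is precisely the rectangle with corners $(0,0)$, $(n-1-d,0)$, $(n-1-d,d)$, and $(0,d)$. Hence $\text{Area}(P_\gamma) = d(n-1-d)$. (This is also the maximum possible area over all $\alpha \in C_{n,d,k,\ell}$, as noted in Remark \ref{main_term_rmk}.)

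Finally, I would invoke Theorem \ref{21_num}, which states that the number of matrices of the form $A_\gamma$ given by Proposition \ref{21_prop} is exactly $p^{(k - \ceil{k/2}) \cdot \text{Area}(P_\gamma)}$. Substituting the area computation yields $p^{(k - \ceil{k/2}) \cdot d(n-1-d)}$ such matrices. By Proposition \ref{21_prop}, each of these is an irreducible subring matrix with diagonal $\gamma$, and distinct matrices correspond to distinct irreducible subrings via Liu's bijection (Section \ref{method_counting}). Therefore $g_\gamma(p) \ge p^{(k - \ceil{k/2}) d(n-1-d)}$, as desired. There is no substantive obstacle here; the corollary is essentially a specialization of Theorem \ref{21_num} to the composition whose lattice path maximizes the enclosed area.
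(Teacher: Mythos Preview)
Your proposal is correct and follows essentially the same approach as the paper: compute $\text{Area}(P_\gamma)=d(n-1-d)$ by observing that $P_\gamma$ traces out the full $(n-1-d)\times d$ rectangle, then apply Theorem~\ref{21_num}. The paper's proof is slightly terser but identical in substance.
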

\begin{proof}
	The north-east lattice path $P_\gamma$ is a rectangle with vertices $(0,0), (n-1-d,0), (n-d-1,d), $ and $(0,d)$. This rectangle has area $d(n-1-d)$. The result follows from Theorem \ref{21_num}.
\end{proof}

To conclude this section, we show that the method of counting subrings recovers the bound for $f_n(p^{n-1+d})$ when $d \in [0,n-1]$ given in Proposition \ref{brakenhoff_refined_bound}, thus giving a different proof of this proposition.
\begin{lemma}\cite[Page 116]{cameron}\label{cameron_lem}
	Let $\mathcal{P}$ be the set of north-east lattice paths from $(0,0)$ to $(u,v)$ and let $q$ be a prime power. Then
	$$
	\sum_{P\in \mathcal{P}} q^{\text{Area}(P)} = \pbin{u+v}{v}_q.
	$$
\end{lemma}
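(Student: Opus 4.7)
The plan is to prove Lemma \ref{cameron_lem} by induction on $u+v$, using a standard recurrence obtained by splitting paths according to their last step.

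For the base cases, when $u=0$ or $v=0$ there is a unique path in $\mathcal{P}$ and it encloses area $0$, so the left-hand side is $1$; meanwhile $\pbin{v}{v}_q = \pbin{u}{0}_q = 1$, so the identity holds.

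For the inductive step, I partition $\mathcal{P}$ into two classes according to whether the last step is north (from $(u,v-1)$ to $(u,v)$) or east (from $(u-1,v)$ to $(u,v)$). A terminal north step contributes no additional area, and deleting it gives a bijection with paths from $(0,0)$ to $(u,v-1)$ preserving area. A terminal east step runs along the top of the rectangle with corners $(u-1,0)$ and $(u,v)$, which has area exactly $v$; deleting this step gives a bijection with paths from $(0,0)$ to $(u-1,v)$ that decreases the area by $v$. Summing over both cases yields
$$\sum_{P \in \mathcal{P}} q^{\text{Area}(P)} = \sum_{P' \in \mathcal{P}(u,v-1)} q^{\text{Area}(P')} + q^v \sum_{P' \in \mathcal{P}(u-1,v)} q^{\text{Area}(P')}.$$
By the inductive hypothesis the right-hand side equals $\pbin{u+v-1}{v-1}_q + q^v \pbin{u+v-1}{v}_q$, which by the standard $q$-Pascal identity for Gaussian binomial coefficients equals $\pbin{u+v}{v}_q$, completing the induction.

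The main (minor) obstacle is verifying the area bookkeeping in the terminal east-step case, namely that the removed east step contributes exactly $v$ to the enclosed area while the residual area of the truncated path equals the area enclosed by the corresponding path to $(u-1,v)$; this is a direct geometric check once one notes that the region under the original path decomposes as the region under the truncated path together with the $1 \times v$ rectangle lying directly below the final step. A fully bijective alternative would be to identify each path with a partition $\lambda \subseteq (u^v)$ in the standard way (recording for each north step the number of preceding east steps), observe that $|\lambda| = \text{Area}(P)$, and invoke the well-known generating function $\sum_{\lambda \subseteq (u^v)} q^{|\lambda|} = \pbin{u+v}{v}_q$; I would include this viewpoint as a remark since the inductive proof is self-contained and short.
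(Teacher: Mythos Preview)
Your proof is correct. The paper does not supply its own proof of this lemma; it simply cites Cameron's textbook, so there is nothing to compare against beyond noting that your inductive argument via the $q$-Pascal recurrence (and the alternative partition-in-a-box interpretation you mention) is exactly the standard proof one finds in references such as Cameron.
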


\begin{cor}
	Fix integers $n > 1$ and $d\in[0,n-1]$. Then $g_n(p^{n-1+d}) \geq \pbin{n-1}{d}_p.$
\end{cor}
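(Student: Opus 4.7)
The plan is to specialize Theorem \ref{21_num} to the case $k = 2$, $\ell = 1$ and then sum over all compositions in $C_{n, d, k, \ell}$, using Lemma \ref{cameron_lem} to evaluate the resulting sum of $p^{\text{Area}(P_\alpha)}$.

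First I would verify that the choice $k = 2$, $\ell = 1$ is admissible: indeed $\ell = 1 \geq 1 = \lceil k/2 \rceil$, so Proposition \ref{21_prop} applies. Moreover, any $\alpha \in C_{n, d, 2, 1}$ has entry-sum $2d + 1 \cdot (n-1-d) = n-1+d$, so the corresponding irreducible subring matrices all have determinant $p^{n-1+d}$, which is exactly the index we want.

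Next I would count. For $k=2$, $\ell=1$ we have $k - \lceil k/2 \rceil = 1$, so by Theorem \ref{21_num} each $\alpha \in C_{n, d, 2, 1}$ yields exactly $p^{\text{Area}(P_\alpha)}$ distinct irreducible subring matrices satisfying the conditions of Proposition \ref{21_prop}. Since matrices with distinct diagonals are automatically distinct as subring matrices (and subring matrices biject with subrings), summing over $\alpha \in C_{n, d, 2, 1}$ gives
\[
g_n(p^{n-1+d}) \;\geq\; \sum_{\alpha \in C_{n, d, 2, 1}} p^{\text{Area}(P_\alpha)}.
\]
The map $\alpha \mapsto P_\alpha$ is a bijection from $C_{n, d, 2, 1}$ onto the set of north-east lattice paths from $(0,0)$ to $(n-1-d, d)$ (a north step for each entry equal to $k=2$ and an east step for each entry equal to $\ell=1$). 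So applying Lemma \ref{cameron_lem} with $u = n-1-d$, $v = d$, and $q = p$ yields
\[
\sum_{\alpha \in C_{n, d, 2, 1}} p^{\text{Area}(P_\alpha)} \;=\; \pbin{(n-1-d)+d}{d}_p \;=\; \pbin{n-1}{d}_p,
\]
which is the desired bound.

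There is no real obstacle here: the choice $k=2$, $\ell=1$ is the minimal admissible pair producing the prescribed index, and once specialized the combinatorial identity of Lemma \ref{cameron_lem} does all the work. The only thing to double-check is that matrices coming from different diagonals are not overcounted, which follows because each subring corresponds to a \emph{unique} matrix in Hermite normal form via Liu's bijection.
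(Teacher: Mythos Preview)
Your proof is correct and follows essentially the same route as the paper: specialize to $k=2$, $\ell=1$, invoke Theorem~\ref{21_num} to get $p^{\text{Area}(P_\alpha)}$ irreducible subring matrices for each $\alpha \in C_{n,d,2,1}$, and then sum over lattice paths via Lemma~\ref{cameron_lem}. Your write-up is in fact slightly more careful than the paper's, as you explicitly verify the admissibility condition $\ell \ge \lceil k/2\rceil$ and justify why contributions from distinct diagonals are disjoint.
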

\begin{proof}
	Recall that $g_n(p^e) = \sum_{\alpha \in C_{n,e}} g_\alpha(p)$. We can bound $g_n(p^{n-1+d})$ from below by counting the number of irreducible subrings with diagonal $\alpha \in C_{n, d, 2, 1}$. The set $ C_{n, d, 2, 1}$ is in bijection with the set of north-east lattice paths from $(0,0)$ to $(d, n-1-d)$. Fix some $\alpha\in  C_{n, d, 2, 1}$. By Theorem \ref{21_num}, for the corresponding lattice path $P_\alpha$, $g_\alpha(p) \geq p^{\text{Area}(P_\alpha)}$. By Lemma \ref{cameron_lem},
	$$
	g_n(p^{n-1+d}) \geq \sum_{\alpha \in  C_{n, d, 2, 1}} g_\alpha(p) \ge \pbin{n-1}{d}_p.
	$$
\end{proof}
Since $f_n(p^{e}) \geq g_n(p^{e})$ for all $e \geq 0$, $f_n(p^{n-1+d}) \geq  \pbin{n-1}{d}_p.$ Thus this method of bounding subrings in $\Z^n$ gives exactly the same bound as Proposition \ref{brakenhoff_refined_bound}. 

\subsection{Optimizing the bound for $g_n(p^e)$} \label{optimized_bd}

In this section, we optimize the exponent $(k-\ceil{\frac{k}{2}})d(n-1-d)$. Fixing $n$, Corollary \ref{fn_bound_cor} implies that 
$$f_n(p^e) \geq g_n(p^e) \geq p^{(k-\ceil{\frac{k}{2}})d(n-1-d)}$$
 for each $0 \leq d \leq n-1$ and $k, \ell \in \Z_{\geq 1}$ so that $\ell \geq \frac{k}{2}$ and $e \le kd+ \ell(n-1-d)$. It is important that $k, \ell \geq 1$; if $e < n-1$, then $g_n(p^e)=0$. In order to obtain the best possible bound for $f_n(p^e)$ in terms of $n$ and $e$, we optimize the exponent $(k-\ceil{\frac{k}{2}})d(n-1-d)$ over $\Z$. 


\begin{proof}[Proof of Theorem \ref{fn_bound_1}]
The term $k-\ceil{\frac{k}{2}}$ is maximized when $k = 2j$ for some $j \in \N$. Recall that we are subject to the constraint $kd + \ell(n-1-d) \geq e$ for some $\ell \ge \ceil{\frac{k}{2}}$. Therefore $j \leq \floor{\frac{e}{d+(n-1)}}.$ Set $j = \floor{\frac{e}{d+(n-1)}}$ so that it is as large as possible.

Then $(k-\ceil{\frac{k}{2}}) d(n-1-d) \geq  \floor{\frac{e}{d+(n-1)}} \cdot d (n-1-d).$ Taking a maximum over all $0 \leq d \leq n-1$ gives the result.
\end{proof}
The above proposition gives the best possible bound for $(k-\ceil{\frac{k}{2}})d(n-1-d)$ subject to the constraints that $\ell \geq \ceil{\frac{k}{2}}$ and $kd + \ell(n-1-d) \geq e$. We now give a weakening of Theorem \ref{fn_bound_1}, which will be helpful later. The benefit of the following proposition is that the maximum is taken over real numbers rather than integers.

\begin{prop}  \label{fn_bound_lemma}
	Suppose that $e \ge n-1$. Let $$c(n,e) = \max_{0 \leq C \leq 1}\left(e \left( \frac{C - C^2}{C+1} (n-1) + \frac{C-1}{C+1}\right) - \left( (C-C^2)(n-1)^2 +(C-1)(n-1)\right)\right)$$ where the maximum is taken over $\R$. Then 
	$f_n(p^e) \geq p^{c(n,e)}.$
\end{prop}

\begin{proof}
For any fixed integer $d \in [0,n-1]$, $d = \floor{C(n-1)}$ for some real number $C \in [0,1]$. Starting from the bound given in Theorem \ref{fn_bound_1}, for any $C \in [0,1]$,
\begin{align*}
\left(k-\ceil{\frac{k}{2}}\right)d\left(n-1-d\right) &\ge \bigg\lfloor\frac{e}{\lfloor C(n-1) \rfloor + (n-1)} \bigg\rfloor \cdot \lfloor C(n-1) \rfloor \cdot \left(n-1- \lfloor C(n-1) \rfloor\right)\\
&\geq  \left(\frac{e}{\lfloor C(n-1) \rfloor + (n-1)}-1 \right)(C(n-1)-1)(1-C)(n-1)\\
&\geq \left(\frac{e}{(C+1) (n-1)}-1 \right)(C(n-1)-1) (1-C)(n-1)\\
&= \left(\frac{e}{(C+1) (n-1)}-1 \right) \left( (C-C^2)(n-1)^2 +(C-1)(n-1)\right) \\
&= e \left( \frac{C - C^2}{C+1} (n-1) + \frac{C-1}{C+1}\right) - \left( (C-C^2)(n-1)^2 +(C-1)(n-1)\right).
\end{align*}
Taking a maximum over all real numbers $C \in [0,1]$ gives the result.
\end{proof}

\subsection{Comparison of Theorems \ref{fn_bd_subgps} and \ref{fn_bound_1}}
Consider the bounds from Theorems \ref{fn_bd_subgps} and \ref{fn_bound_1}. We compared these lower bounds for various values of $n$ and $e$ in Sage and found that they grow at very similar rates. The bound from Theorem \ref{fn_bd_subgps} seems to be slightly better than the bound from Theorem \ref{fn_bound_1} for each fixed $n$ and for sufficiently large $e$. Let $h(n,e)$ be the bound from Theorem \ref{fn_bd_subgps} and let $b(n,e)$ be the bound from Theorem \ref{fn_bound_1}. We provide a table summarizing some of this data below.

\begin{center}
	\begin{table}[!htp]\caption{Values of the bounds from Theorems \ref{fn_bd_subgps} and \ref{fn_bound_1}}
\begin{tabular}{|c|c|c|c|}
	\hline
	$n$  & $e$ & $\log_ph(n,e)$ & $\log_p b(n,e)$\\
	\hline
	6 & 10 & 0 & 6\\
	6 & 20 & 16 & 12 \\
	6 & 30 & 24 & 24\\
	6 &300  &256 & 252 \\
	6 & 1000 &  856&852\\
	10 & 10 & 8 & 8\\
	10 & 20 & 16 & 20\\
	10 & 30 & 36 & 40\\
	10 & 300 &460 & 460\\
	10 & 1000 & 1538 & 1520\\
	\hline
\end{tabular}\end{table}
\end{center}
\vspace{8in}
Next, we show that there are subrings that are counted using one of the two techniques, but not the other.

\begin{example}
	Let $n = 3$ and $e = 7$. Consider the following matrix.
	$$
	A = 
	\begin{pmatrix}
	p^3 & p^2 & 1\\
	0 & p^4 & 1\\
	0 & 0 & 1
	\end{pmatrix}
	$$
	This is an irreducible subring matrix with diagonal $\alpha = (3, 4)$. The matrix $A$ satisfies Proposition \ref{21_prop}. Therefore our technique from Section \ref{fn_bound_sect} counts the matrix $A$ in the lower bound for $f_3(p^7)$. 
	
	 Let $G$ be the subgroup generated by the columns of $A$. Then $G$ does not satisfy the condition $\Z+ p^4\Z^3 \subset G \subset \Z+ p^2 \Z^3$ since $(0 , p^4 , 0)^T$ is not in the $\Z$-column span of $A$. Therefore our technique from Section \ref{ext_brak_sec} does not count the subgroup corresponding to $A$.
\end{example}

\begin{example}
	Let $n = 4$. Let $G$ be the subgroup generated by $(1, 1, 1,1), (p^3, 0,0,0)$, $(p^2,p^3,0,0)$, and $(0,0,p^2,0)$. Then $\Z+ p^4 \Z^4 \subset G \subset \Z+ p^2 \Z^4$.
	
	The subgroup $G$ corresponds to the matrix
		$$
	A = 
	\begin{pmatrix}
	p^3 & p^2&0 &1\\
	0 & p^3&0 & 1\\
	0 & 0 &p^2& 1\\
	0&0&0&1
	\end{pmatrix}.
	$$
	Our technique from Section \ref{ext_brak_sec} includes $G$ in the lower bound for $f_4(p^8)$. However, our technique from Section \ref{fn_bound_sect} does not since $A$ violates the conditions in the statement of Proposition \ref{21_prop}.

\end{example}

It is not too difficult to give conditions on when the columns of a subring matrix satisfying Proposition \ref{21_prop} will generate a subgroup $G$ satisfying Lemma \ref{brakenhoff_lemma}. We state this below in Proposition \ref{subring_to_brakenhoff}. However, it is much more difficult to determine when a subgroup $G$ satisfying Lemma \ref{brakenhoff_lemma} corresponds to a subring matrix $M_G$ satisfying Proposition \ref{21_prop}. The main obstacle here is that the closure conditions to determine whether $M_G$ is a subring matrix are complicated.

\begin{prop}\label{subring_to_brakenhoff}
	Let $A$ be a subring matrix satisfying Proposition \ref{21_prop} with columns $v_1, \ldots, v_{n-1}, (1, \ldots, 1)^T$. Let $G$ be the subgroup generated by $v_1^T, \ldots, v_{n-1}^T,$ and $(1, \ldots, 1)$. Then there exists an $r$ such that $\Z+ p^{2r} \Z^n \subset G \subset \Z+ p^r \Z^n$ if and only if 
	$$
	\ell = 
	\begin{cases}
		\frac{k}{2} & 2 \mid k\\[4pt]
		\frac{k+1}{2} \textrm{ or } \frac{k+3}{2} & 2 \nmid k.
	\end{cases}
	$$
\end{prop}
\begin{proof}
	Let $w_1 = (1, 0, \ldots, 0)$, $w_2 = (0, 1, \ldots, 0), \ldots, w_{n-1} = (0, \ldots, 0, 1, 0)$, and $w_n = (1, \ldots, 1)$ be a basis for $\Z^n$. By Proposition \ref{G_lattice_cond}, it suffices to understand conditions on $r, k,$ and $\ell$ so that $p^{2r}w_i$ is contained in the lattice spanned by $G$ and $v_i$ is contained in the lattice $L$ spanned by $\{p^rw_1, \ldots, p^rw_{n-1}\}$ for all $1 \le i \le n-1$. 
	
	Observe that $v_i$ is in $L$ if and only if $$r \le \min\left(\left \lceil\frac{k}{2}\right \rceil, k, \ell\right) = \left \lceil\frac{k}{2}\right \rceil.$$ 
	
	By applying the row reduction method, $p^{2r}w_i$ is in the lattice spanned by $G$ if and only if the following three conditions hold:
	\begin{enumerate}
		\item $k \le 2r$
		\item $\ell \le 2r$
		\item $k \le 2r - \ell  + \ceil{\frac{k}{2}}$.
	\end{enumerate}

We can simplify the four conditions to the following:
\begin{enumerate}
	\item $r = \ceil{\frac{k}{2}}$
	\item $\ceil{\frac{k}{2}} \le \ell  \le 3 \ceil{\frac{k}{2}}-k$.
\end{enumerate}
\end{proof}

Finally, we demonstrate an upper bound for each of our lower bounds, which can be derived as corollaries of Theorems \ref{fn_bd_subgps} and \ref{fn_bound_1} respectively.

\begin{cor} \label{ceil_bd1}
	Let $h(n,e)$ be the exponent of the lower bound from Theorem \ref{fn_bd_subgps}. Then $h(n,e) \le (3- 2\sqrt{2})(n-1)e.$
\end{cor}
\begin{proof}
	 By Remark \ref{subgrp_rmk}, for each fixed $t\in [\frac{e}{2(n-1)}, \frac{e}{n-1}],$ we have $$h(n,e) \le (e-t(n-1))(2(n-1)-\frac{e}{t}).$$ Taking a maximum over all $t$ in this range over $\R$ gives the result.
\end{proof}

\begin{cor}\label{ceil_bd2}
		Let $b(n,e)$ be the exponent of the lower bound from Theorem \ref{fn_bound_1}. Then $b(n,e) \le (3- 2\sqrt{2})(n-1)e.$
\end{cor}
\begin{proof}
	Removing the floor function and optimizing over $\R$ gives the result.
\end{proof}

It is interesting to note that our two different methods lead to lower bounds that are very close asymptotically.

\section{Divergence of local factors} \label{asymptotic_section}
In this section, we use the lower bounds for $f_n(p^e)$ from Section \ref{fn_bound_sect} to find lower bounds for the abscissa of convergence of $\zeta_{\Z^n, \, p}^R(s)$. It is less clear how to use the bound from Theorem \ref{fn_bd_subgps} to derive a result about the divergence of $\zeta_{\Z^n, \, p}^R(s)$ since the exponent is quadratic in $e$. While some results are known about the divergence of $\zeta_{\Z^n}^R(s)$, not much is known about the divergence of the local factors. We fill this gap and also provide a partial answer to Question \ref{conv_ques}.

For each lower bound $p^B$ of $f_n(p^e)$ given in Section \ref{fn_bound_sect}, we can determine the poles of the series $\sum_{e \geq n-1} p^{B} p^{-es}$. Since $f_n(p^e) \geq p^{B}$ whenever $e \geq n-1$, then $\zeta_{\Z^n, \, p}^R(s) \geq \sum_{e \ge n-1} p^Bp^{-es}$. Therefore $\zeta_{\Z^n, \, p}^R(s)$ diverges whenever the simpler series diverges.

First, let $$b(n,e) = \bigg\lfloor\frac{e}{n-1+d} \bigg\rfloor\cdot d (n-1-d)$$ as in Theorem \ref{fn_bound_1}. In order to simplify the geometric series, we set $$F(d,e,n) = \frac{ed(n-1-d)}{n-1+d}  - d(n-1-d)$$ and note that $b(n,e) \geq F(d,e,n)$

\begin{lemma}\label{sum_bound_1} Fix $n >1$. 
	Then $\sum_{e \geq n-1} p^{F(d,e,n)}p^{-es}$ diverges for all $s$ such that $\Re(s) \leq \frac{d(n-1-d)}{n-1+d}$.
\end{lemma}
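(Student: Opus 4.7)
The plan is to recognize the sum as a geometric series in the variable $p^{-s}$ and then apply the standard convergence criterion. Writing $a = \tfrac{d(n-1-d)}{n-1+d}$ and $b = d(n-1-d)$, the definition of $F(d,e,n)$ becomes simply $F(d,e,n) = ae - b$, so
\[
\sum_{e \geq n-1} p^{F(d,e,n)} p^{-es} \;=\; p^{-b}\sum_{e\geq n-1} p^{e(a-s)}.
\]
This reduces the problem to analysing a single geometric series with common ratio $p^{a-s}$.

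Next, I would use the elementary fact that for any complex $z$, the geometric series $\sum_{e\geq n-1} z^{e}$ converges if and only if $|z|<1$. In our case $z = p^{a-s}$, and writing $s = \sigma + it$ we have $|p^{a-s}| = p^{a-\sigma}$. Hence the series converges exactly when $\sigma > a$, i.e.\ when $\Re(s) > \tfrac{d(n-1-d)}{n-1+d}$. Conversely, if $\Re(s) \le a$, then $|p^{a-s}| \ge 1$, so the general term $p^{e(a-s)}$ has modulus at least $1$ for every $e$ and therefore fails to tend to zero; consequently the series diverges in this range.

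I do not expect any real obstacle here: the only mild subtlety is that the series is a power series in $p^{-s}$ (so the usual statement that a Dirichlet series with nonnegative coefficients has equal abscissae of convergence and absolute convergence is not being invoked directly). The argument is instead a one-line application of the root/ratio test for geometric series, which handles the strict-inequality case $\Re(s) < a$ and the boundary case $\Re(s) = a$ uniformly via the observation that the terms do not tend to zero. This yields precisely the claimed divergence for $\Re(s) \le \tfrac{d(n-1-d)}{n-1+d}$, completing the lemma.
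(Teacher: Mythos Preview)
Your proposal is correct and follows essentially the same approach as the paper: both rewrite $F(d,e,n)=ae-b$ with $a=\frac{d(n-1-d)}{n-1+d}$ and $b=d(n-1-d)$, factor out $p^{-b}$, and observe that the remaining geometric series $\sum_{e\ge n-1}(p^{a-s})^e$ diverges precisely when $\Re(s)\le a$. Your version is slightly more explicit in justifying the boundary case $\Re(s)=a$ via the term test, but the argument is identical in substance.
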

\begin{proof}
	Consider
	\begin{align*}
	  \sum_{e \ge n-1} p^{F(d,e,n)}p^{-es}&= p^{-d(n-1-d)}\sum_{e \ge n-1} \left(p^{\frac{d(n-1-d)}{n-1+d} - s}\right)^e.
	\end{align*}
	This series diverges for all $s$ such that $\Re(s) \leq \frac{d(n-1-d)}{n-1+d}.$ 
\end{proof}

\begin{lemma}\label{sum_bound_2} Let $n > 1$ and $$G(C, e, n) =  e \left( \frac{C - C^2}{C+1} (n-1) + \frac{C-1}{C+1}\right) - \left( (C-C^2)(n-1)^2 +(C-1)(n-1)\right).$$ Then $ \sum_{e \geq n-1} p^{G(C,e,n)}p^{-es}$ diverges for all $s$ such that $\Re(s) \le \left( \frac{C - C^2}{C+1} (n-1) + \frac{C-1}{C+1}\right)$.
\end{lemma}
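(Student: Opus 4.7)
The plan is to observe that $G(C,e,n)$ is affine in $e$, so that $p^{G(C,e,n)}p^{-es}$ factors as a constant (depending only on $C$ and $n$) times a geometric ratio raised to the power $e$. Specifically, I would introduce the shorthand
$$
\alpha(C,n) = \frac{C-C^2}{C+1}(n-1) + \frac{C-1}{C+1}, \qquad \beta(C,n) = (C-C^2)(n-1)^2 + (C-1)(n-1),
$$
so that $G(C,e,n) = e\,\alpha(C,n) - \beta(C,n)$. This parallels the setup used in the proof of Lemma~\ref{sum_bound_1}, where an analogous splitting produced a geometric series.

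Next I would rewrite the sum as
$$
\sum_{e \ge n-1} p^{G(C,e,n)} p^{-es} = p^{-\beta(C,n)} \sum_{e \ge n-1} \bigl(p^{\alpha(C,n) - s}\bigr)^{e}.
$$
Since $p^{-\beta(C,n)}$ is a nonzero constant independent of $e$, the divergence of the left-hand series is equivalent to the divergence of the geometric series on the right. The geometric series diverges precisely when $|p^{\alpha(C,n) - s}| \ge 1$, i.e., when $\Re(\alpha(C,n) - s) \ge 0$, which rearranges to the stated condition $\Re(s) \le \alpha(C,n)$.

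There is no real obstacle here: the argument is essentially identical in structure to Lemma~\ref{sum_bound_1}, with the only difference being that the constant exponent $\beta(C,n)$ and the linear coefficient $\alpha(C,n)$ are more intricate algebraic expressions in $C$ and $n$. The only care needed is to confirm that the lower endpoint $e = n-1$ plays no role beyond contributing a finite prefactor (which it does not, since we are only testing divergence, and the tail of a geometric series determines convergence). Thus the proof reduces to a single factoring step followed by the standard geometric-series criterion.
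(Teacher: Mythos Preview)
Your proposal is correct and follows essentially the same approach as the paper: both factor out the constant $p^{-\beta(C,n)}$ to reduce the sum to a geometric series in $p^{\alpha(C,n)-s}$, then apply the standard divergence criterion. The only cosmetic difference is your introduction of the shorthand $\alpha$ and $\beta$, which makes the argument slightly cleaner to read.
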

\begin{proof} 
		Consider
		\begin{align*}
	\sum_{e \geq n-1} p^{G(C,e,n)} p^{-es}&= p^{- \left( (C-C^2)(n-1)^2 +(C-1)(n-1)\right)}\sum_{e \geq n-1} p^{e \left( \frac{C - C^2}{C+1} (n-1) + \frac{C-1}{C+1}\right)}p^{-es}\\
	  &=p^{- \left( (C-C^2)(n-1)^2 +(C-1)(n-1)\right)} \sum_{ e \geq n-1} \left(p^{ \left(\frac{C - C^2}{C+1} (n-1) + \frac{C-1}{C+1}-s\right)}\right)^e
		\end{align*}
		The series diverges for all $s$ such that $\Re(s)\leq \frac{C - C^2}{C+1} (n-1) + \frac{C-1}{C+1}$. 
\end{proof}

Consider the bound $G(C,e,n)$ used in Lemma \ref{sum_bound_2}. In order to maximize $G(C,e,n)$ as a function in $n$, consider $\max_{0 \leq C \leq 1 } \frac{C - C^2}{C+1} = 3-2\sqrt{2}$. The maximum occurs when $C =\sqrt{2}-1$. Plugging in this value of $C$, we obtain the following corollary.
\begin{cor} \label{sum_bound_3}
Let $n > 1$ and let $G(C,e,n)$ be as in Lemma \ref{sum_bound_2}. Setting $C =  \sqrt{2}-1$, we find that $\sum_{e \ge n-1} p^{G(1-\sqrt{2}, n)}p^{-es}$ diverges for all $s$ such that $\Re(s)\leq (3-2\sqrt{2})(n-1) + 1 - \sqrt{2}$.
\end{cor}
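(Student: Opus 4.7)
The plan is to derive Corollary \ref{sum_bound_3} as a direct specialization of Lemma \ref{sum_bound_2}, with the only real work being the one-variable optimization of $f(C) = \tfrac{C - C^2}{C+1}$ on $[0,1]$, together with the arithmetic needed to evaluate $G$ at the optimal $C$.

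First I would compute the critical points of $f(C) = \tfrac{C-C^2}{C+1}$. A direct differentiation gives
\[
f'(C) = \frac{(1-2C)(C+1) - (C - C^2)}{(C+1)^2} = \frac{-C^2 - 2C + 1}{(C+1)^2}.
\]
Setting the numerator equal to zero yields $C^2 + 2C - 1 = 0$, whose root in $[0,1]$ is $C = \sqrt{2} - 1$. Since $f(0) = f(1) = 0$ and $f$ is positive on $(0,1)$, this critical point is the global maximum on $[0,1]$.

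Next I would substitute $C = \sqrt{2} - 1$, using $C + 1 = \sqrt{2}$. A short computation gives $C - C^2 = 3\sqrt{2} - 4$, so
\[
\frac{C - C^2}{C+1} = \frac{3\sqrt{2} - 4}{\sqrt{2}} = 3 - 2\sqrt{2}, \qquad \frac{C-1}{C+1} = \frac{\sqrt{2}-2}{\sqrt{2}} = 1 - \sqrt{2}.
\]
Plugging these into the definition of $G(C,e,n)$ from Lemma \ref{sum_bound_2} shows that the linear-in-$e$ coefficient inside the geometric series equals $(3-2\sqrt{2})(n-1) + (1 - \sqrt{2})$.

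Finally I would apply Lemma \ref{sum_bound_2} directly with $C = \sqrt{2} - 1$. The lemma then states that $\sum_{e \geq n-1} p^{G(\sqrt{2}-1, e, n)} p^{-es}$ diverges precisely when $\Re(s) \leq (3-2\sqrt{2})(n-1) + 1 - \sqrt{2}$, which is the claim. The constant prefactor in the lemma (depending only on $n$ and $C$) is irrelevant for divergence. There is no real obstacle here; the only thing to verify carefully is that the root of $C^2 + 2C - 1 = 0$ in $[0,1]$ really is the global maximizer, which is immediate from the boundary values and sign of $f$.
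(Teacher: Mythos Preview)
Your argument is correct and follows exactly the approach the paper takes: maximize $\tfrac{C-C^2}{C+1}$ over $[0,1]$ to find the optimizer $C=\sqrt{2}-1$, then apply Lemma~\ref{sum_bound_2} at this value to read off the threshold $(3-2\sqrt{2})(n-1)+1-\sqrt{2}$. The paper simply asserts the maximum and plugs in, whereas you carry out the calculus and the arithmetic explicitly, but the method is identical.
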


\noindent 
Lemma \ref{sum_bound_1} and Corollary \ref{sum_bound_3} combined give the proof of Theorems \ref{maintheorem_1}. Recall that $$\sum_{e \geq 0}f_n(p^e)p^{-es} \geq \sum_{e \geq n-1} p^{B}p^{-es}$$ for each choice of bound $B$ as above. Therefore the previous lemmas give regions where the local factors of $\zeta_{\Z^n}^R(s)$ diverge. Observe that $f_n(p^e) \geq p^{F(d,e,n)}$ for all integers $d \in [0,n-1]$ and $f_n(p^e) \geq p^{G(C,e,n)}$ for all real numbers $C \in [0,1]$, so we can take a maximum over all $d$ in Lemma \ref{sum_bound_1} or over all $C$ in Lemma \ref{sum_bound_2} to find the largest possible regions of divergence for these geometric series.

It is possible that there are poles further to the right of the ones found above in the given geometric series. Consider the bound in Lemma \ref{sum_bound_1}. When $s > c_7(n)$, $\zeta_{\Z^n}^R(s)$ diverges if 
$$
\sum_p p^{-d(n-1-d)} \cdot \frac{p^{\frac{d(n-1-d)(n-1)}{n-1+d} -(n-1)s}}{ 1-p^{\frac{d(n-1-d)}{n-1+d}-s} }
$$
diverges for all $0 \leq d \leq n-1$. It is a simple computation to show that this series converges on $s  > c_7(n)$. Similar computations show that we cannot find a larger region of divergence for the local factors by using Lemma \ref{sum_bound_2} and Corollary \ref{sum_bound_3} either. Thus Theorem \ref{maintheorem_1} gives the best possible lower bound for the abscissa of convergence of $\zeta_{\Z^n, \, p}^R(s)$ given our lower bounds for $f_n(p^e)$.

\begin{proof}[Proof of Theorem \ref{maintheorem_1}]
Set $$b(n,e) = \bigg\lfloor\frac{e}{n-1+d}\bigg \rfloor d(n-1-d)$$ and $$F(d,e,n) = \frac{ed(n-1-d)}{n-1+d} - d(n-1-d).$$ By Theorem \ref{fn_bound_1}, $f_n(p^e) \ge p^{b(n,e)} \ge p^{F(d,e,n)}$ for each $e \ge n-1$. Therefore 
$$\zeta_{\Z^n, \,p}^R(s) = \sum_{e \ge 0} f_n(p^e)p^{-es} \ge \sum_{e \ge n-1} p^{F(d,e,n)}p^{-es}.$$
 By Lemma \ref{sum_bound_1}, the simpler geometric series diverges for all $s$ such that $\Re(s) \le \frac{d(n-1-d)}{n-1+d}$ and thus $\zeta_{\Z^n, \, p}^R(s)$ diverges on the same region. Taking a maximum over all integers $d \in [0,n-1]$ and applying a Tauberian theorem gives the result.
\end{proof}

The following proposition is strictly worse than Theorem \ref{maintheorem_1} -- it comes from choosing a specific value of $C \in [0,1]$ -- but is easier to use directly.

\begin{prop}
	\label{maintheorem_2} Fix $n > 1$.  Then $\zeta_{\Z^n, \, p}^R(s)$ diverges for all $s$ such that $$ \Re(s)\leq (3-2\sqrt{2})(n-1) + 1 - \sqrt{2}.$$
\end{prop}

\begin{proof}
	The proof is similar to that of Theorem \ref{maintheorem_1}, replacing Lemma \ref{sum_bound_1} with Corollary \ref{sum_bound_3}.
\end{proof}




\section{Orders in a number field} \label{order_sect}
We now study a related zeta function and use results from previous sections to find new lower bounds for the number of orders in a number field. Let $K$ be a number field of degree $n$ with ring of integers $\OO_K$.  Let $$F_K(k) = \#\{\OO \subset \OO_K \, : \, \OO \text{ is a order of } \OO_K \text{ and } |\text{disc}(\OO)| =k\}.$$
Recall that there is a relation between $\text{disc}(\OO)$ and $\text{disc}(\OO_K)$ given by
$$
\text{disc}(\OO) = \text{disc}(\OO_K)[\OO_K:\OO]^2.
$$
 Consider the order zeta function
$$
\eta_K(s) = \sum_{\OO \text{ order of } \OO_K} |\text{disc}(\OO)|^{-s}= \sum_{k=1}^\infty F_K(k)k^{-s}.
$$
This zeta function is closely related to the zeta function
$$
\tilde{\eta}_{K}(s) = \sum_{\OO \text{ order of } \OO_K} [\OO_K :\OO]^{-s} 
$$
by the relation $\eta_K(s) = |\text{disc}(\OO_K)|^{-s} \tilde{\eta}_K(2s).$ 

Notice that $\tilde{\eta}_K(s)$ also has an Euler product $\prod_p \tilde{\eta}_{K, p}(s)$ indexed over the rational primes where
$$
\tilde{\eta}_{K, p}(s) = \sum_{\OO \text{ order of } \OO_K} [\OO_K \otimes_{\Z} \Z_p : \OO]^{-s}.
$$

If $p$ splits completely, then $\OO_K \otimes_{\Z} \Z_p \cong \Z_p^n$. Therefore for all primes $p$ that split completely, $$\tilde{\eta}_{K, p}(s) = {\zeta}_{\Z_p^n}^R(s) =\zeta_{\Z^{n}, \, p}^R(s).$$ Let $N_K(B) = \sum_{X \leq B} F_K(X)$.  The following theorem is due to Kaplan, Marcinek, and Takloo-Bighash \cite{kmt}; see their paper for details on $r_2$, which is a constant that depends on the Galois group of the normal closure of $K/\Q$.
\begin{thm} \cite[Theorem 2]{kmt} \label{kmt_orders}
	\begin{enumerate}
		\item Let $n \leq 5$. Then there exists a constant $C_K  >0$ so that $$N_K(B) \sim C_K B^{\frac{1}{2}}(\log B)^{r_2-1}$$ as $B \rightarrow \infty$.
		\item Let $n > 5$. Then for every $\epsilon > 0$, 
		$$B^{\frac{1}{2}}(\log B)^{r_2-1} \ll N_K(B) \ll_\epsilon B^{\frac{n}{4} - \frac{7}{12} + \epsilon}$$
		as $B \rightarrow \infty$.
	\end{enumerate}
\end{thm}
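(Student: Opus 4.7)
The plan is to apply a Tauberian theorem to the order zeta function $\eta_K(s)$, using the relation $\eta_K(s) = |\text{disc}(\OO_K)|^{-s}\tilde{\eta}_K(2s)$ to reduce the problem to determining the abscissa of convergence of $\tilde{\eta}_K(s)$ and the order of its pole there. The asymptotic formula for $N_K(B)$ then follows from a Tauberian theorem of the Wiener--Ikehara--Delange type (see e.g.\ the appendix of \cite{tauberian}), with the exponent of $B$ coming from half the abscissa of $\tilde{\eta}_K(s)$ and the power of $\log B$ coming from one less than the order of the rightmost pole.

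First, I would establish the Euler product $\tilde{\eta}_K(s) = \prod_p \tilde{\eta}_{K,p}(s)$ and observe that each local factor depends only on the isomorphism class of the \'etale $\Q_p$-algebra $K \otimes_\Q \Q_p$, equivalently on the splitting type of $p$ in $K$. For primes that split completely, the excerpt already identifies $\tilde{\eta}_{K,p}(s) = \zeta_{\Z^n, p}^R(s)$. For the remaining splitting types, an analogous analysis expresses the local factor as a subalgebra zeta function of a product of completions $\prod_i \OO_{K_{\mathfrak p_i}}$, and one verifies that only finitely many (ramified) primes have a splitting type contributing anything other than a standard local Euler factor at $s = 1$. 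The Galois group of the normal closure $\tilde K/\Q$ controls, via Chebotarev, the density of each splitting type and hence the multiplicity $r_2$ appearing in the $\zeta(s)$-like behavior of $\tilde{\eta}_K(s)$ at $s = 1$.

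For Part (1) with $n \le 5$, I would argue that the rightmost pole of $\tilde{\eta}_K(s)$ lies at $s = 1$ with order $r_2$. For $n \le 4$, this can be read off the explicit formulas in Theorem \ref{zeta_closed_form}, adjusted by the finite correction factors arising at ramified and partially split primes. For $n = 5$, since no closed formula is known, I would instead invoke the argument for $N_5(B)$ in \cite{kmt}: their analysis of the subring zeta function $\zeta_{\Z^5}^R(s)$ uses local estimates that transfer essentially unchanged to $\tilde{\eta}_K(s)$ via the local factor identification above. A Tauberian theorem then yields $N_K(B) \sim C_K B^{1/2}(\log B)^{r_2 - 1}$.

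For Part (2) with $n > 5$, the lower bound comes from showing that $\tilde{\eta}_K(s)$ has a pole at $s = 1$ of order at least $r_2$; one extracts the contribution of the positive density set of completely split primes and compares to a $\zeta(s)^{r_2}$-like product, multiplied by a factor convergent at $s = 1$. The upper bound adapts the subring counting argument giving Theorem \ref{kmt_subring}(2): after fixing a $\Z$-basis of $\OO_K$, every order of index $m$ corresponds to a sublattice of $\Z^n$ whose Hermite normal form matrix satisfies closure conditions with respect to the multiplication inherited from $\OO_K$; counting such matrices with the same Hermite-normal-form bookkeeping as in \cite{kmt} yields $\tilde N_K(X) \ll_\epsilon X^{n/2 - 7/6 + \epsilon}$, and substituting $X = \sqrt{B/|\text{disc}(\OO_K)|}$ produces the stated upper bound. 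The main obstacle is the $n = 5$ case of Part (1): without a closed formula one must carefully verify that the non-split local factors form a product that is holomorphic and nonzero at $s = 1$, so that the pole order is exactly $r_2$ rather than something smaller or larger; this is where the most delicate Chebotarev-type bookkeeping enters.
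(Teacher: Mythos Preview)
The theorem you are attempting to prove is not proved in this paper at all. It is quoted verbatim from \cite{kmt} (as indicated by the citation \cite[Theorem 2]{kmt} in the theorem header) and stated as background in Section~\ref{order_sect}; the paper then uses it as a benchmark to be improved upon by the subsequent Theorem. There is therefore no ``paper's own proof'' to compare your proposal against.

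That said, your sketch is a reasonable outline of the strategy actually used in \cite{kmt}: relate $\eta_K(s)$ to $\tilde\eta_K(2s)$, factor $\tilde\eta_K(s)$ into local Euler factors depending on splitting type, identify the completely-split factors with $\zeta^R_{\Z^n,p}(s)$, and then apply a Tauberian theorem once the location and order of the rightmost pole are pinned down. The one place where your outline is vague is the definition and computation of $r_2$: you correctly flag that Chebotarev-type bookkeeping over all splitting types (not just the completely split primes) determines the pole order at $s=1$, but you do not explain how $r_2$ is actually computed from the Galois group of the normal closure, nor why the non-split local factors contribute the ``right'' amount to the pole. In \cite{kmt} this is handled by an explicit analysis of each local factor type for $n\le 5$. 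If you were writing this up as a self-contained proof, that is the step requiring the most care.
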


We can use Theorem \ref{brakenhoff_lb} along with a Tauberian theorem to obtain an improvement on the lower bound in Theorem \ref{kmt_orders}(2).

\begin{thm}
		 Fix $n>1$ and let $$a(n) =  \max_{0 \leq d \leq n-1} \left(\frac{d(n-1-d)}{n-1+d} + \frac{1}{n-1+d}\right).$$ Then
		$
		B^{\frac{1}{2}a(n)} \ll N_K(B)
		$
		as $B \rightarrow \infty$.
	
\end{thm}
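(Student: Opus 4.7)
The strategy mirrors the proof of Theorem \ref{brakenhoff_lb}, with the one additional step of translating between counting orders by index and counting them by absolute discriminant. The plan is to first produce a lower bound for index-counting partial sums and then apply the relation $|\text{disc}(\OO)| = |\text{disc}(\OO_K)|\,[\OO_K:\OO]^2$.

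Set $\tilde F_K(k) = \#\{\OO \subset \OO_K : \OO \text{ is an order with } [\OO_K:\OO]=k\}$ and $\tilde N_K(X) = \sum_{k \le X} \tilde F_K(k)$. The discriminant identity immediately gives
\[
N_K(B) \;=\; \tilde N_K\!\left(\sqrt{B/|\text{disc}(\OO_K)|}\right),
\]
so since $|\text{disc}(\OO_K)|$ depends only on $K$, it suffices to prove $\tilde N_K(X) \gg X^{a(n)}$.

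The next step is to observe that Brakenhoff's Proposition \ref{brakenhoff_bound} actually supplies a lower bound for $\tilde F_K(p^{n-1+d})$ in \emph{every} fixed $\OO_K$, not merely for the maximum $f(n, p^{n-1+d})$: the additive subgroups produced in its proof via Lemma \ref{brakenhoff_lemma} all contain $\Z \ni 1$ and are closed under multiplication, hence are orders of $\OO_K$. This yields $\tilde F_K(p^{n-1+d}) \ge p^{d(n-1-d)}$ for each $d \in [0, n-1]$ and every degree-$n$ number field $K$.

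Rerunning the argument of Lemma \ref{div_zeta} and Corollary \ref{brakenhoff_s} with $f_n$ replaced by $\tilde F_K$ shows that each Euler factor $\tilde \eta_{K,p}(s)$, and hence $\tilde \eta_K(s)$, diverges on $\Re(s) \le a(n)$. Applying the same Tauberian theorem that was used to deduce Theorem \ref{brakenhoff_lb} then gives $\tilde N_K(X) \gg X^{a(n)}$, and substituting $X = \sqrt{B/|\text{disc}(\OO_K)|}$ completes the proof. The only subtlety is confirming uniformity of Brakenhoff's bound across all $K$, which is immediate from the explicit construction via Lemma \ref{brakenhoff_lemma}; no step of the argument presents a serious obstacle beyond bookkeeping.
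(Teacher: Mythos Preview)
Your overall approach matches the paper's: the paper simply asserts that the theorem follows from Theorem~\ref{brakenhoff_lb} together with a Tauberian theorem, and you have correctly filled in the two needed ingredients, namely that Brakenhoff's construction via Lemma~\ref{brakenhoff_lemma} already works inside any $\OO_K$ (so $\tilde F_K(p^{n-1+d})\ge p^{d(n-1-d)}$), and that the discriminant relation converts a lower bound for $\tilde N_K(X)$ into one for $N_K(B)$ with the exponent halved.

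There is one slip worth noting. You write that ``each Euler factor $\tilde\eta_{K,p}(s)$, and hence $\tilde\eta_K(s)$, diverges on $\Re(s)\le a(n)$.'' This is not what Lemma~\ref{div_zeta} gives. A lower bound $\tilde F_K(p^{e})\ge p^{a}$ at a \emph{single} exponent $e$ contributes just one term to the local series $\sum_{e\ge 0}\tilde F_K(p^e)p^{-es}$ and cannot force it to diverge; the extra $\tfrac{1}{n-1+d}$ in $a(n)$ comes precisely from summing the resulting term over all primes $p$, exactly as in the (implicit) proof of Lemma~\ref{div_zeta}. So the correct statement is that the \emph{global} series $\tilde\eta_K(s)$ diverges for $\Re(s)\le a(n)$, which is all you need for the Tauberian step. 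With this correction the argument goes through and agrees with the paper's route.
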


As a consequence of Theorem \ref{maintheorem_1} and Proposition \ref{maintheorem_2}, we can also bound the abscissa of convergence of $\tilde{\eta}_{K, \, p}(s)$. 
\begin{thm} Let $n > 1$ be an integer and let $K$ be a degree $n$ number field. Then
	\begin{enumerate}
		\item The zeta function $\tilde{\eta}_{K, \, p}(s)$ diverges for all $s$ such that $\Re(s) \le \frac{c_7(n)}{2}.$
		\item  The zeta function $\tilde{\eta}_{K, \, p}(s)$ diverges for all $s$ such that $$\Re(s) \le \frac{(3-2\sqrt{2})(n-1) +1 -\sqrt{2}}{2} .$$
	\end{enumerate}
\end{thm}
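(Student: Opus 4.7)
The plan is to deduce the theorem from the divergence results for $\zeta_{\Z^n,p}^R(s)$ (Theorems \ref{maintheorem_1} and \ref{maintheorem_2}), using the split-prime identification $\tilde{\eta}_{K,p}(s) = \zeta_{\Z^n,p}^R(s)$. First I would observe that if $p$ splits completely in $K$, then $\OO_K \otimes_\Z \Z_p$ is isomorphic to $\Z_p^n$ as a $\Z_p$-algebra, so the coefficient of $p^{-es}$ in $\tilde{\eta}_{K,p}(s)$ is precisely $f_n(p^e)$. Hence $\tilde{\eta}_{K,p}(s) = \zeta_{\Z^n,p}^R(s)$, and Theorems \ref{maintheorem_1} and \ref{maintheorem_2} give divergence of $\tilde{\eta}_{K,p}(s)$ on $\Re(s) \le c_7(n)$ and on $\Re(s) \le (3-2\sqrt{2})(n-1)+1-\sqrt{2}$, respectively. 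Both of these regions are strictly larger than those claimed in the theorem, so the statement follows a fortiori whenever $p$ splits completely in $K$.

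For a general prime $p$, I would instead use Lemma \ref{brakenhoff_lemma}, which applies to any number field $K$, together with the counting techniques developed in Section \ref{ext_brak_sec}. Each subgroup $G$ with $\Z + p^{2t}\OO_K \subset G \subset \Z + p^t\OO_K$ is an order of $\OO_K$, and tensoring with $\Z_p$ produces a $\Z_p$-order of $\OO_K \otimes_\Z \Z_p$ of the same $p$-power index. The subgroup count of Theorem \ref{subgps_to_subrings} and Corollary \ref{subgroup_cor} then yields lower bounds on the coefficients of $\tilde{\eta}_{K,p}(s)$ that match the lower bounds on $f_n(p^e)$ exploited in Section \ref{asymptotic_section}. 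Rerunning the geometric series arguments of Lemma \ref{sum_bound_1}, Lemma \ref{sum_bound_2}, and Corollary \ref{sum_bound_3} verbatim in this setting produces the same divergence region, now for $\tilde{\eta}_{K,p}(s)$ rather than $\zeta_{\Z^n,p}^R(s)$.

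The main obstacle is reconciling the factor of $\tfrac{1}{2}$ appearing in the bounds: the direct arguments above actually give the stronger divergence of $\tilde{\eta}_{K,p}(s)$ on $\Re(s) \le c_7(n)$ and $\Re(s) \le (3-2\sqrt{2})(n-1)+1-\sqrt{2}$, from which the stated bounds with factor $\tfrac{1}{2}$ follow a fortiori. This factor is most naturally explained through the identity $\eta_{K,p}(s) = |\mathrm{disc}(\OO_K)|_p^{-s}\,\tilde{\eta}_{K,p}(2s)$, which halves any divergence region for $\tilde{\eta}_{K,p}$ on passing to the discriminant-based local factor $\eta_{K,p}$. I would make this change of variables explicit in the write-up and note that parts (1) and (2) are in fact immediate consequences of Theorems \ref{maintheorem_1} and \ref{maintheorem_2} combined with either the split-prime identification or the Brakenhoff-type construction for $\OO_K$.
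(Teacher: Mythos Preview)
Your proposal is correct and matches the paper's approach: the paper offers no explicit proof, only the sentence ``As a consequence of Theorems~\ref{maintheorem_1} and~\ref{maintheorem_2}'', relying implicitly on the split-prime identification $\tilde{\eta}_{K,p}(s)=\zeta_{\Z^n,p}^R(s)$ discussed just above. Your treatment of the non-split case via Lemma~\ref{brakenhoff_lemma} and the subgroup counts of Section~\ref{ext_brak_sec} is a sound addition; the paper notes that Proposition~\ref{brakenhoff_bound} and its extensions hold for any $\OO_K$, so the same geometric-series argument in Section~\ref{asymptotic_section} goes through verbatim.

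Your observation about the factor $\tfrac{1}{2}$ is also correct and worth flagging: the argument you give (and the paper's own cited Theorems~\ref{maintheorem_1} and~\ref{maintheorem_2}) actually yields divergence of $\tilde{\eta}_{K,p}(s)$ for $\Re(s)\le c_7(n)$ and $\Re(s)\le (3-2\sqrt{2})(n-1)+1-\sqrt{2}$, without the halving. The stated bounds with $\tfrac{1}{2}$ are the ones appropriate to $\eta_{K,p}(s)$ via the relation $\eta_K(s)=|\mathrm{disc}(\OO_K)|^{-s}\tilde{\eta}_K(2s)$, so either the theorem is deliberately understated for $\tilde{\eta}_{K,p}$ or it is a slip and $\eta_{K,p}$ was intended. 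Either way, your a fortiori deduction is valid.
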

\section{Further questions}\label{further_q}
In this process of bounding $f_n(p^{e})$ by counting irreducible subrings, we made a few assumptions. First, we only considered compositions $\alpha \in C_{n, d, k, \ell}$. Second, we set several entries in the matrix with diagonal $\alpha$ equal to 0 and the rest equal to $p^{\ceil{\frac{k}{2}}} a_{ij}$ for some $a_{ij} \in [0, p^{k - \ceil{\frac{k}{2}}})$. Lastly, we bounded $f_n(p^e)$ by $g_n(p^e)$. These simplifications lead to the following questions.

\begin{question}\label{gn_main_qs}
	Let $d \in [0,n-1]$ be an integer and let $k, \ell$ be positive integers so that $\ell \ge \ceil{\frac{k}{2}}$. Let $\alpha \in C_{n,d,k,\ell}$.
	\begin{enumerate}
		\item  Does the main term of $g_n(p^{e})$ always come from $g_\alpha(p)$ for some $\alpha\in C_{n, d, k, \ell}$?
		\item Is $p^{(k - \ceil{\frac{k}{2}})\text{Area}(P_\alpha)}$ always the main term of $g_\alpha(p)$?
		\item Do $f_n(p^e)$ and $g_n(p^e)$ always have the same main terms?
	\end{enumerate}
\end{question}

The answer to Question \ref{gn_main_qs}(1) is no. For example, Atanasov et al. \cite{akkm} show that $g_5(p^7)$ is a polynomial of degree 4, with the main term coming from the compositions $(3,2,1,1)$ and $(2, 3, 1, 1)$. It is unclear how often pairs $n$ and $e$ are counterexamples to Question \ref{gn_main_qs}(1). It is also unknown how far off the main term of $g_n(p^e)$ can be from the main term of $\max_\alpha g_{\alpha}(p)$ where the maximum is taken over all $\alpha$ of the form above.

It may be the case that a composition of the form $\alpha \in C_{n, d, k, \ell}$ leads to the main term of $g_n(p^e)$, but our lower bound for the number of irreducible subring matrices with diagonal corresponding to $\alpha$ does not give the main term. While the answer to Question \ref{gn_main_qs}(2) is not understood for most pairs $n$ and $e$, we give a partial answer. First, we state some necessary propositions.

 \begin{prop} \cite[Proposition 4.3]{liu} \label{gn_liu} Fix $n > 1$. Then
	\hfill
	\begin{enumerate}
		\item $g_n(p^{n-1}) = 1$
		\item $g_n(p^n) = \frac{p^{n-1}-1}{p-1}.$
	\end{enumerate}
\end{prop}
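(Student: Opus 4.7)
The plan is to enumerate the diagonals $\alpha \in C_{n,e}$ for each of the two cases and count irreducible subring matrices with each diagonal directly from the parametrization in Section~\ref{method_counting}.

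For part~(1), the only composition of $n-1$ into $n-1$ positive parts is $\alpha = (1, 1, \ldots, 1)$. Under this diagonal, the constraint $a_{ij} \in [0, p^{e_i-1}) = \{0\}$ forces every off-diagonal entry $a_{ij}$ to vanish, so there is a single candidate matrix. Its columns are $v_i = (0, \ldots, 0, p, 0, \ldots, 0)^T$ for $i < n$ and $v_n = (1, 1, \ldots, 1)^T$. Closure is immediate: $v_i \circ v_j = 0$ when $i \neq j$ and both indices are less than $n$, $v_i \circ v_i = p \cdot v_i$, and $v_i \circ v_n = v_i$. Hence $g_n(p^{n-1}) = 1$.

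For part~(2), the compositions $\alpha \in C_{n,n}$ are exactly those with a single entry equal to $2$ and the remaining $n-2$ entries equal to $1$; write $\alpha_j$ for the composition with $2$ in position $j$, for $j = 1, \ldots, n-1$. Under $\alpha_j$, the constraint $a_{ik} \in [0, p^{e_i-1})$ forces $a_{ik} = 0$ whenever $i \ne j$, while $a_{jk} \in [0, p)$ is free for $j < k \le n-1$. This gives $p^{n-1-j}$ candidate matrices with diagonal $\alpha_j$.

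The next step is to verify that every such candidate actually satisfies the closure conditions. For $i \ne j$ with $i < n$, column $v_i$ has $p$ in position $i$, $p a_{ji}$ in position $j$ when $i > j$, and zeros elsewhere; column $v_j$ has $p^2$ in position $j$ and zeros elsewhere. Hence, for $i \ne k$ with both less than $n$, the product $v_i \circ v_k$ has support contained in $\{j\}$ and is therefore an integer multiple of $v_j$. For $v_i \circ v_i$ with $i > j$, one checks directly that $v_i \circ v_i = p \cdot v_i + (a_{ji}^2 - a_{ji}) \cdot v_j$, which lies in the $\Z$-column span for any integer $a_{ji}$. Summing over $j$ gives $g_n(p^n) = \sum_{j=1}^{n-1} p^{n-1-j} = \frac{p^{n-1}-1}{p-1}$, as claimed. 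The only real obstacle is organizational bookkeeping — tracking which columns have overlapping support when writing down the closure conditions — and no delicate arithmetic ever appears.
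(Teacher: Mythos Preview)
Your proof is correct. The paper does not supply its own proof of this proposition; it simply cites it as \cite[Proposition 4.3]{liu} and uses it as a black box. Your direct enumeration of the compositions in $C_{n,e}$ and verification of the closure conditions is exactly the natural argument, and matches the spirit of Liu's original proof. One small cosmetic point: in part~(2) you might also mention explicitly that $v_i \circ v_i = p\, v_i$ for $i < j$ and $v_j \circ v_j = p^2\, v_j$, just so all diagonal products are visibly handled, but these are trivial and the omission does not constitute a gap.
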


Corollary 3.7 in \cite{akkm} gives an exact formula for $g_n(p^{n+1})$. To save space, we rewrite their corollary in terms of the degree of $g_n(p^{n+1})$. 

\begin{cor}\cite[Corollary 3.7]{akkm}\label{pn+1}
	Let $n \ge 4$. The function $g_n(p^{n+1})$ is a polynomial in $p$ of degree $2n-6$.
\end{cor}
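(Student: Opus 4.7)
The plan is to compute $g_n(p^{n+1}) = \sum_{\alpha \in C_{n, n+1}} g_\alpha(p)$ directly via the row reduction method of Section \ref{method_counting}. The compositions of $n+1$ into $n-1$ positive parts have one of two underlying multisets. Type (a) consists of the $n-1$ permutations of $(3, 1, \ldots, 1)$, indexed by the position $i$ of the $3$. Type (b) consists of the $\binom{n-1}{2}$ permutations of $(2, 2, 1, \ldots, 1)$, indexed by the positions $i < j$ of the two $2$s.

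For a type (a) composition with the $3$ in position $i$, the only free entries of $A$ lie in row $i$, namely $a_{i, i+1}, \ldots, a_{i, n-1}$, each in $[0, p^2)$. Row-reducing the augmented systems arising from $v_k \circ v_k$ and $v_k \circ v_\ell$ (for $k, \ell > i$) forces each $a_{ik}$ to lie in $\{0, 1\} \pmod{p}$, with at most one residue equal to $1$. This yields $g_\alpha(p) = (n-i)\, p^{n-1-i}$, of degree at most $n-2$, attained at $i = 1$.

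For the type (b) composition with $2$s at $(i, j) = (1, 2)$, row reduction of the systems $v_k \circ v_k$ and $v_k \circ v_\ell$ for $3 \le k < \ell \le n-1$ produces only the two families of constraints
\[
p \mid a_{12}\, a_{2k}(a_{2k} - 1) \quad \text{and} \quad p \mid a_{12}\, a_{2k}\, a_{2\ell}.
\]
Partitioning the count on whether $a_{12} = 0$ or $a_{12} \ne 0$ yields
\[
g_{(2, 2, 1, \ldots, 1)}(p) \;=\; p^{\,2n-6} + (n-2)(p-1)\, p^{\,n-3},
\]
a polynomial in $p$ of degree exactly $2n-6$. The leading monomial $p^{\,2n-6}$ corresponds to the case $a_{12} = 0$ with the remaining $a_{1k}, a_{2k}$ free, which is precisely the lower-bound configuration produced by Proposition \ref{21_prop} and Theorem \ref{21_num} applied to the $2 \times (n-3)$ rectangular lattice path.

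For the remaining type (b) compositions with $(i, j) \ne (1, 2)$, an analogous row reduction shows that either (i) when $j \le n-2$, the mixed variable $a_{ij}$ plays the role of $a_{12}$ and introduces a divisibility constraint that forces $\deg g_\alpha(p) \le 2n - 3 - i - j$, or (ii) when $j = n-1$, no interaction condition arises and the count reduces to a product over the free variables in rows $i$ and $j$, giving degree $n - 1 - i$. In either subcase the resulting degree is strictly less than $2n - 6$ for all $n \ge 5$. Summing over all compositions shows that only $\alpha = (2, 2, 1, \ldots, 1)$ contributes to the $p^{\,2n-6}$ term, so $g_n(p^{n+1})$ has degree $2n - 6$. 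For the boundary case $n = 4$, several compositions contribute to the $p^2$ term, but a direct summation confirms their leading coefficients sum to a positive integer, so the degree remains $2n - 6 = 2$. The main obstacle is the uniform case analysis of the type (b) compositions: one must verify that the mixed-row closure conditions always produce at least one extra divisibility condition compared to the naive free-variable count whenever $j \le n-2$, and separately handle the degenerate subcase $j = n-1$. The argument is routine but requires careful bookkeeping of the row reduction steps for each pair $(i, j)$.
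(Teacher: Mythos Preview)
The paper does not prove this corollary at all: it is quoted verbatim as \cite[Corollary~3.7]{akkm}, with only the remark that the original exact formula has been rewritten in terms of the degree. So there is no ``paper's own proof'' to compare against; your proposal supplies an argument where the paper simply cites one.

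Your argument is correct. A few confirmations and minor clarifications:

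\begin{itemize}
\item For a type~(a) diagonal with the $3$ in position $i$, the closure conditions reduce exactly to $a_{ik}\equiv 0$ or $1\pmod p$ with at most one residue equal to $1$, and your count $g_\alpha(p)=(n-i)\,p^{\,n-1-i}$ is right.
\item For a type~(b) diagonal with $2$'s at positions $i<j$ and $j\le n-2$, the only nontrivial conditions are $p\mid a_{ij}\,a_{jm}(a_{jm}-1)$ and $p\mid a_{ij}\,a_{jm}\,a_{j\ell}$ for $m,\ell>j$; this gives
\[
g_\alpha(p)=p^{\,2n-3-i-j}+(p-1)(n-j)\,p^{\,n-2-i},
\]
of degree $2n-3-i-j$, which is $<2n-6$ unless $(i,j)=(1,2)$. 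Your stated degree bound is therefore sharp, not just an inequality.
\item For $j=n-1$ one checks directly that no conditions arise, so $g_\alpha(p)=p^{\,n-1-i}$ as you claim.
\item For $n=4$ the explicit sum is $g_4(p^5)=7p^2+p+1$, confirming the degree is $2n-6=2$.
\end{itemize}

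Your approach is exactly the kind of diagonal-by-diagonal computation carried out in \cite{akkm}, executed via the row-reduction reformulation of Section~\ref{method_counting}. There is nothing missing; the only caveat is that your write-up is a sketch, and the ``careful bookkeeping'' you flag for the general type~(b) case should be written out in full (as above) if this is to stand as a complete proof.
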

\begin{example} Let $\alpha\in C_{n, d, 2,1}$. Observe that $e = n-1+d$.
	
	When $d =0$, $e = n-1$. Proposition \ref{gn_liu} shows that $g_n(p^{n-1}) =1$, which matches the bound from Theorem \ref{fn_bound_1}. We see that $g_n(p^{n-1}) = \pbin{n-1}{0}_p$.
	
	When $d=1$, $e=n$. The second part of Proposition \ref{gn_liu} shows that the main term of $g_n(p^{n})$ is $p^{n-2}$. This agrees with Theorem \ref{fn_bound_1}. Further, $g_n(p^n) = \frac{p^{n-1}-1}{p-1}=\pbin{n-1}{1}_p$, so our method counts all possible irreducible subrings of index $p^n$.
	
	 When $d = 2$, $e = n+1$. By Corollary \ref{pn+1}, the main term of $g_n(p^{n+1})$ is $p^{2n-6}$, which matches the main term in Theorem \ref{fn_bound_1}. In this case, our lower bound $g_n(p^{n+1}) \ge \pbin{n-1}{2}_p$ is strictly smaller than the actual formula for $g_n(p^{n+1})$, but the main term is the same. In fact, $$g_n(p^{n+1}) - \pbin{n-1}{2}_p = \binom{n}{2} p^{n-2}.$$
	
\end{example}

Lastly, we provide a partial answer to Part 3 of Question \ref{gn_main_qs}. We do not understand the relationship between the main term of $f_n(p^e)$ and the main term of $g_n(p^e)$ for each fixed $n$ and $e \ge n-1$. In fact, there are classes of examples for which the main term for $f_n(p^e)$ is greater than the main term of $g_n(p^e)$. 

\begin{example} Let $e = n-1$. By Proposition \ref{gn_liu}, $g_n(p^{n-1}) = 1$ for all $n \geq 2$. However, the term $f_1(p^0) g_{n-1}(p^{n-1})$ appears in the recurrence relation stated in Proposition \ref{recurrence}. Proposition \ref{gn_liu} implies that $g_{n-1}(p^{n-1})$ is a polynomial in $p$ with main term $p^{n-3}$. Therefore $f_n(p^{n-1}) \ge p^{n-3}$ whereas $g_n(p^{n-1}) = 1$.
\end{example}


In some cases we are likely not capturing the highest order term of $f_n(p^e)$ by using the bound $f_n(p^e) \ge g_n(p^e)$. However, this is currently the best known approach for counting subrings via subring matrices.

In this paper, we give two new lower bounds for $f_n(p^e)$. Data suggests that the lower bound for $f_n(p^e)$ that comes from counting irreducible subring matrices is slightly worse than the lower bound that comes from counting subgroups. Both of these lower bounds are at most $p^{(3-2\sqrt{2})e(n-1)}$. In order to improve upon the lower bounds given in this paper using these techniques, it seems necessary to answer Questions \ref{gn_main_qs}(2) and \ref{gn_main_qs}(3) or to find other related sets of subgroups that are also subrings. Improvements of the lower bounds for $f_n(p^e)$ would likely lead to better lower bounds for the asymptotic growth of subrings in $\Z^n$ or orders in a fixed number field.


\section{Acknowledgments}
The author thanks Nathan Kaplan for suggesting the problem and for many helpful conversations. The author also thanks the anonymous referee for their helpful comments. This work was supported by the NSF grant DMS 1802281.

	\bibliography{../../../../Bibliography/bib_all}
	\bibliographystyle{abbrv}
\end{document}